\theoremstyle{plain}
\newtheorem{theorem}{Theorem}[section]
\newtheorem{lemma}[theorem]{Lemma}
\newtheorem{corollary}[theorem]{Corollary}
\newtheorem{proposition}[theorem]{Proposition}
\newtheorem{definition}[theorem]{Definition}
\newtheorem{remark}[theorem]{Remark}
\numberwithin{equation}{section}
\numberwithin{figure}{section}
\numberwithin{table}{section}
\newcommand{\R}{\mathbb{R}}
\newcommand{\C}{\mathbb{C}}
\newcommand{\N}{\mathbb{N}}
\newcommand{\bigO}{\mathcal{O}}
\newcommand{\abs}[1]{\left|#1\right|}
\newcommand{\norm}[1]{\left\|#1\right\|}
\newcommand{\support}{\operatorname{supp}}
\newcommand{\laplace}{\Delta}
\newcommand{\tracejump}[1]{{\left\llbracket \gamma #1 \right \rrbracket}}
\newcommand{\normaljump}[1]{{\left\llbracket \partial_{\nu} #1 \right \rrbracket}}
\newcommand{\normaljumps}[1]{{\llbracket \partial_{\nu} #1  \rrbracket}}
\def\UU{\mathcal{U}}
\def\VV{\mathcal{V}}
\def\YY{\mathcal{Y}}
\def\TT{\mathcal{T}}
\def\SS{\mathcal{S}}
\def\ii{i}
\def\fdiv{\operatorname{div}}
\def\LL{\mathcal{L}}
\def\HHwr{H^1_{\rho}(y^\alpha,\R^d\times\R^+)}
\def\HHwry{H^1_{\rho}(y^\alpha,\R^d \times (0,\YY))}
\def\HHwy{H^1(y^\alpha,\R^d \times (0,\YY))}
\def\WHwr{H^1_{\rho}(y^\alpha,\R^{d}\setminus \Gamma \times (0,\YY))}
\def\WHwrInf{H^1_{\rho}(y^\alpha,\R^{d}\setminus \Gamma \times (0,\infty))}
\def\HH{\mathbb{H}}
\def\VHx{\mathbb{V}_h^{x}}
\def\VHy{\mathbb{V}_h^{y}}
\def\VHl{\mathbb{V}_h^{\lambda}}
\def\trace{\mathrm{tr_0}}
\renewcommand{\Re}{\operatorname{Re}}
\newcommand{\includeTikzOrEps}[1]{ \include{figures/#1}}  
\newcommand{\includeTikzOrEps}[1]{\includegraphics{figures_pdf/#1}}
\title{FEM-BEM coupling in Fractional Diffusion}
\author{Markus Faustmann\footnote{
Institute for Analysis and Scientific Computing, TU Wien, Vienna, Austria,
\texttt{markus.faustmann@tuwien.ac.at}} 
\, and Alexander Rieder\footnote{
Institute for Analysis and Scientific Computing, TU Wien, Vienna, Austria,
\texttt{alexander.rieder@tuwien.ac.at}}}
\date{\today}
\begin{document}
\maketitle
\begin{abstract}
We derive and analyze a fully computable discrete scheme for fractional partial differential equations posed on the full space $\R^d$. Based on a reformulation using the well-known Caffarelli-Silvestre extension, we study a modified variational formulation to obtain well-posedness of the discrete problem. Our scheme is obtained by combining a diagonalization procedure with a reformulation using boundary integral equations and a coupling of finite elements and boundary elements. For our discrete method we present a-priori estimates as well as numerical examples.
\end{abstract}

\section{Introduction}
In this work, we study stationary fractional partial differential equations posed on the full space $\R^d$ with $d=2,3$ of the form 
\begin{align}\label{eq:stat_model_prob}
  \LL^{\beta} u + s u &= f \;  \qquad \text{in $\R^d$},  \qquad \quad   \LL u :=-\fdiv\big( \mathfrak{A} \nabla u\big)
\end{align}
with $s \geq 0$,  and $\beta \in (0,1)$.
Fractional PDEs of this type are oftentimes used to model non-local effects in physics, finance or image processing,  \cite{bv16,collection_of_applications}.

Regarding the formal definition of non-integer powers $\LL^\beta$ of differential operators, there are various different descriptions in literature such as Fourier transformation, semigroup approaches, singular integrals or spectral calculus, see \cite{whatis}. 
A distinct advantage of full-space formulations as in \eqref{eq:stat_model_prob} is that all of these definitions are equivalent, \cite{tendef}, while there are significant differences in the definitions, if one restricts the problem to a bounded domain. 
\bigskip

Nonetheless, there are usually no closed form solutions to these problems available and therefore numerical approximations are used. In order to derive a computable approximation, most numerical methods employ formulations on bounded domains, for which there is a fairly well developed literature. 
We mention the surveys  \cite{bbnos18,whatis} as well as finite element methods for the integral definition of the fractional Laplacian \cite{ab17,abh19,fmk22}, for the spectral definition \cite{NOS15,pde_frac_parabolic}, and semigroup approaches 
\cite{bp15,blp_accretive}. We especially mention the very influential reformulation using the extension approach by Caffarelli and Silvestre~\cite{caffarelli_silvestre} (see also~\cite{stinga_torrea} for a more general setting), which allows to use PDE techniques in the analysis. This approach paired with an $hp$-FEM approach in the extended direction has proven to be an
effective strategy both for elliptic~\cite{mpsv17,tensor_fem,bms20,fmms21,fmms22} as well as
parabolic problems \cite{pde_frac_parabolic,hp_for_heat}.

Many numerical approaches for the full-space formulation, like \cite{akmr21} for the fractional Allen-Cahn equation, rely on truncation of the full-space problem to a bounded domain, which induces an additional truncation error that needs to be investigated. 
A different approach that avoids any truncation errors is the use of a  coupling of finite elements on a truncated domain and boundary elements appearing from a reformulation of the unbounded exterior part as a boundary integral equation.  
We refer to the classical works \cite{jn80,costabel88a,han90} for the one-equation/Johnson-Nédélec coupling and the symmetric coupling for elliptic transmission problems. For the standard Laplacian these methods are well-posed and thoroughly analyzed, \cite{sayas09,steinbach11,affkmp13}.

In this work, we introduce a method for elliptic full-space fractional operators that combines the mentioned Caffarelli-Silvestre extension approach with FEM-BEM coupling techniques. 
More precisely, inspired by \cite{LS09,sayas09}, we reformulate the extension problem as a variational problem, where the solution on a bounded domain and an exterior solution
 in an exotic Hilbert space are sought. Using suitable Poincar\'e inequalities, we show well-posedness of the continuous formulation. 
In order to obtain a computable approximation, we then use the diagonalization procedure of  \cite{tensor_fem}, which leads to a sequence of  Helmholtz-type transmission problems. For those, we employ a standard coupling of FEM and BEM of symmetric type,
as proposed by \cite{costabel88a,han90}. 
Finally, we present an a priori analysis for a discretization with $hp$-FEM in the extended variable.  

This work builds on the recent a priori analysis of \cite{part_one}, where
  the regularity and the decay properties of the analytic full-space solution are established.

\subsection{Layout}

The present paper is structured as follows: In the remainder of Section~1, we introduce our model problem as well as necessary notation and most notably, the Caffarelli-Silvestre extension problem. In Section~2, we formulate our main results: well-posedness of our formulation, the fully-discrete scheme using the diagonalization procedure together with the symmetric FEM-BEM coupling, and, finally, a best-approximation result. Section~3 provides the proofs for the well-posedness and the diagonalization procedure and, most notably, a Poincar\'e type estimate. Section~4 contains the proofs for the a-priori analysis of the fully discrete formulation using $hp$-finite elements in the extended variable, which builds upon the regularity and decay properties of \cite{part_one}. Finally, Section~5 presents some numerical examples that validate the proposed method. 

\subsection{Notations}
Throughout the text we use the symbol $a \lesssim b$ meaning that $a \leq Cb$ with a generic constant $C>0$ that is independent of any crucial quantities in the analysis. Moreover, we write $\simeq$ to indicate that both estimates $\lesssim$ and $\gtrsim$ hold.

We employ classical integer order Sobolev spaces $H^k(\Omega)$ on (bounded) Lipschitz domains $\Omega$ and the fractional Sobolev spaces $H^t(\R^d)$ for $t \in \R$ defined, e.g., via Fourier transformation. We also need Sobolev spaces on the boundary $\Gamma:=\partial \Omega$ of a bounded Lipschitz domain $\Omega \subset \R^d$, denoted by $H^{t}(\Gamma)$  with $t \in [-1,1]$. One way to properly define them is by using local charts, see \cite{sauter_schwab} for details.

\subsection{Assumptions on the model problem}
Let $d=2,3$. We consider \eqref{eq:stat_model_prob} and, for functions $u \in L^2(\R^d)$, define the self adjoint operator  $\LL^{\beta}$
 using spectral calculus
$$
\LL^{\beta} u:=\int_{\sigma(\LL)} z^\beta dE \; u,
$$
where $E$ is the spectral measure of $\LL$ and $\sigma(\LL)$ is the spectrum of $\LL$. Using standard techniques this definition can be extended to tempered distributions.
\bigskip

For the data, we assume $f \in L^2(\Omega)$ and  $\mathfrak{A} \in L^{\infty}(\R^d,\R^{d\times d})$ is  pointwise symmetric and positive definite in the sense that there exists $\mathfrak{A}_0 >0$ such that
\begin{align*}
(\mathfrak{A}(x) y,y)_2 \geq \mathfrak{A}_0 \norm{y}_2^2 \qquad \forall y \in \R^d.
\end{align*}
In order to avoid several additional difficulties due to decay conditions at infinity, we assume $s \geq\sigma_0>0$ for the case $d=2$. 
\bigskip

Additionally, in order to be able to apply FEM-BEM coupling techniques, we make the following  assumptions on the coefficients in the model problem: There exists a bounded Lipschitz domain $\Omega \subseteq \R^d$ such that
\begin{enumerate}
\item $\support f \subseteq \Omega$,
\item  $\mathfrak{A} \equiv I$ in $\R^d\setminus \overline{\Omega}$.
\end{enumerate}

This is not the most general setting where our techniques can be applied. For example,
   also a lowest order term could be included in the definition of $\LL$ in~\eqref{eq:stat_model_prob}; see also Remark~\ref{rem:more_general_setting}.

\subsection{Degenerate elliptic extension}
In the same way as in our previous work \cite{part_one}, we use a reformulation of the fractional PDE as the Dirichlet-to-Neumann mapping for a degenerate elliptic PDE in a half space in $\R^{d+1}$, the so called Caffarelli-Silvestre extension, \cite{caffarelli_silvestre,stinga_torrea}. 

We recall the definition of weighted Sobolev spaces used for the extension. For any bounded open subset
$D\subset \R^d\times \R$, we define $L^2(y^\alpha,D)$ as the space of square integrable functions with respect to the weight $y^\alpha$ and the Sobolev space   $H^1(y^\alpha,D) \subset L^2(y^\alpha,D)$ of functions with finite norm
$$
\norm{\UU}_{H^1(y^\alpha,D)}^2 := \int\int_{D}y^\alpha\Big( \big|\nabla \UU(x,y)\big|^2 
  +\big| \UU(x,y)\big|^2 \Big)\,dx\,dy.
$$ 
We also employ the spaces $L^2(y^\alpha,(0,\YY))$ and $H^1(y^\alpha,(0,\YY))$ for $\YY \in (0,\infty]$ defined in an analogous way by omitting the $x$-integration.

For  \emph{unbounded} sets $D$, we additionally use the weight 
\begin{align*}
\rho(x,y):=(1+\abs{x}^2+\abs{y}^2)^{1/2} \qquad (x,y) \in \R^d\times \R
\end{align*}
to take care of the behaviour at infinity. In this case, we define the 
space $H^1_{\rho}(y^\alpha,D)$ as the space of all square integrable functions $\UU$  (with respect to the weight function $y^\alpha \rho^{-2}$) such that the norm
\begin{align}\label{eq:normHHw}
  \norm{\UU}_{H^1_{\rho}(y^\alpha,D)}^2 :=
  \int\int_{D}{y^\alpha \Big( \big|\nabla \UU(x,y)\big|^2
  + \rho(x,y)^{-2} \big| \UU(x,y)\big|^2 \Big)\,dx\,dy}
\end{align}
is finite. 
Commonly used cases are $D=\R^d\times \R^+$ (full space), $D = \R^d \times (0,\YY)$ for $\YY>0$ (corresponding to truncation in $y$-direction), or $D=\omega \times (0,\YY)$ 
for $\omega \subset \R^d$ and $\YY >0$.

Moreover, we also employ spaces acting only in $x$. Using the weight 
\begin{align*}
\rho_x(x) := \rho(x,0),
\end{align*}
we introduce $L^2_{\rho_x}(\R^d)$ and $H_{\rho_x}^1(\R^d)$ as in \eqref{eq:normHHw} by omitting the $y$-integration.
\bigskip


For functions $\UU$ in $\HHwr$, one can give meaning to their trace at $y=0$, which we denote by $\operatorname*{tr}_0\UU$. In fact, by \cite[Lemma 3.8]{KarMel19} and  \cite[Lem.~3.1]{part_one} we have the trace estimates
\begin{align}
  \begin{split}
        \label{eq:trace}
        \abs{\trace \UU}_{H^\beta(\R^d)}  
        &\lesssim \norm{\nabla \UU}_{L^2(y^\alpha,\R^d\times\R^+)}  \\
            \|(1+|x|^2)^{-\beta/2} \trace\UU\|_{L^2(\R^d)}
            &\lesssim \norm{\nabla \UU}_{L^2(y^\alpha,\R^d\times\R^+)} \qquad \text{ if } d=3.
          \end{split}
\end{align}

Then, the extension problem reads as: find $\UU \in \HHwr$ such that
\begin{subequations}\label{eq:extension}
\begin{align}
 - \fdiv\big(y^\alpha \mathfrak{A}_x \nabla \UU\big) &= 0 \qquad \text{in $\R^{d} \times \R^+$},\\
  d^{-1}_{\beta}\partial_{\nu^\alpha} \UU +s \trace\UU&= f \qquad \text{in $\R^{d}$},
\end{align}
\end{subequations}
where $d_{\beta}:=2^{1-2\beta}\Gamma(1-\beta)/\Gamma(\beta)$, $\alpha:=1-2\beta \in (-1,1)$, $\partial_{\nu^\alpha} \UU(x) := -\lim_{y\rightarrow 0} y^\alpha \partial_y \UU(x,y)$, and $\mathfrak{A}_x = \begin{pmatrix} \mathfrak{A} & 0 \\ 0 & 1 \end{pmatrix} \in \R^{(d+1) \times (d+1)}$.
By \cite{stinga_torrea}, the solution to \eqref{eq:stat_model_prob} is then given by $u = \trace \,\UU$.
\bigskip

For the domain $\Omega$ with boundary $\Gamma := \partial \Omega$, we also introduce the usual  trace operators $\gamma_\Gamma^-$ (denoting the trace coming from the interior of $\Omega$) and $\gamma_\Gamma^+$ (denoting the trace coming from $\R^d\backslash\overline{\Omega}$) and correspondingly the normal derivative operators $\partial_{\nu,\Gamma}^\pm$ (see \cite{sauter_schwab} for details).
The normal vector $\nu$ is always assumed to face out of $\Omega$. 
With theses operators, the jumps across $\Gamma$ are defined as
\begin{align}
  \tracejump{u}&=\gamma^{-}_{\Gamma} u - \gamma^+_\Gamma u,
  \qquad
  \normaljump{u}=\partial^-_{\nu,\Gamma}u - \partial^+_{\nu,\Gamma}u.
\end{align}
We will apply these operators for functions in $H^1_{\rho}(y^{\alpha},\R^d \setminus \Gamma \times \R^+)$, where they
are to be understood pointwise with respect to $y$.
This is equivalent to taking the trace and normal derivative along the lateral boundary $\Gamma \times \R^+$.
\bigskip

\section{Main results}

\subsection{Variational formulation}

The weak formulation of \eqref{eq:extension} in $\HHwr$ reads as finding $\UU \in \HHwr$ such that
\begin{align}\label{eq:weakform}
  A(\UU,\VV)&:=
  \int_{0}^{\infty}{y^\alpha \int_{\R^d}\mathfrak{A}_x(x)\nabla \UU \cdot \nabla \VV \; dx dy} + s d_{\beta} \int_{\R^d}{\trace \UU \trace \VV\,dx} = d_\beta (f,\trace \VV)_{L^2(\R^d)}
  \end{align}
for all $\VV \in \HHwr$. Well-posedness of the continuous formulation follows from \cite[Prop~2.3]{part_one}.
\bigskip

In order to also include our discretization scheme, we work in a slightly expanded
variational form, inspired by~\cite{LS09,sayas09}.
In short, one can formulate an equivalent problem for the solution inside $\Omega$ and 
a function $\UU_\star$  on $\R^d$ defined in a modified Hilbert space. 

\begin{definition}
  Fix $\YY \in (0,\infty]$. 
  We consider the space
 \begin{align*}
   \HH_\YY:=\Big\{(\UU_\Omega, \UU_\star) \in H^1(y^\alpha, \Omega \times (0,\YY)) \times H_\rho^1(y^\alpha, \R^d \setminus \Gamma\times (0,\YY)): \;& \\
   \qquad\tracejump{\UU_{\star}} = \gamma^-_{\Gamma} \UU_\Omega, \;
  \gamma^-_{\Gamma} \UU_{\star} = 0,\; 
  s \, \trace\UU_{\star} \in L^2(\R^d)&\Big\}
 \end{align*}
 equipped with the norm 
  \begin{align*}
    \|\UU\|_{\HH_\YY}^2&:=\|(\UU_\Omega,\UU_\star)\|_{\HH_\YY}^2 \\
    &:= \|\UU_{\Omega}\|_{H^1(y^\alpha, \Omega\times (0,\YY))}^2
      + \|\UU_{\star}\|_{H^1_\rho(y^\alpha, \R^d \setminus \Gamma\times (0,\YY))}^2 
      + s \|{\trace{\UU_\Omega}}\|_{L^2(\Omega)}^2
      + s \|{\trace{\UU_\star}}\|_{L^2(\R^d)}^2.
  \end{align*}
  \end{definition}
  
 We note that, by definition, the additional condition of $\trace \UU_\star$ being in $L^2(\R^d)$ is only needed for $s\neq 0$ as in this case the norm in $\HH_\YY$ contains said $L^2$-term, which has to be finite.
 \bigskip

With $\UU = (\UU_\Omega,\UU_\star) \in  \HH_{\infty}$ and $\VV = (\VV_\Omega,\VV_\star) \in  \HH_{\infty}$, we define the bilinear form $B: \HH_{\infty} \times \HH_{\infty}\to \R$ as
  \begin{align}\label{eq:modified_BLF}
    B(\UU,\VV)
    :=&\int_0^{\infty}\int_{\Omega}{y^\alpha \mathfrak{A}_x(x)\nabla \UU_\Omega \cdot \nabla \VV_{\Omega} \;dx dy}
      + \int_0^{\infty}\int_{\R^d}{y^\alpha \nabla \UU_\star \cdot  \nabla \VV_{\star}\;dx dy}                   \nonumber \\
    &+ s d_{\beta} \int_{\Omega}{\trace \UU_\Omega \trace \VV_{\Omega}\,dx} + s d_{\beta} \int_{\R^d}{\trace \UU_\star \trace \VV_\star\,dx}.                          
  \end{align}
For $f \in L^2(\Omega)$, the weak formulation is given as the problem of finding $\UU \in \HH_{\infty}$ such that
  \begin{align}
    \label{eq:weak_problem_stat}
    B(\UU,\VV) &= d_{\beta} \int_{\Omega}{f \,\trace \VV_\Omega} \; dx \qquad \forall \; \VV = (\VV_\Omega,\VV_\star) \in \HH_{\infty}.
  \end{align}
Problems \eqref{eq:extension} and \eqref{eq:weak_problem_stat} are connected as follows: If $\UU_{\infty} =  (\UU_\Omega,\UU_\star)  \in \HH_{\infty}$ solves 
\eqref{eq:weak_problem_stat}, then the function $\UU:=\begin{cases}\UU_\Omega, \quad \text{ in }\Omega \\ \UU_\star, \quad \text{ in }\R^d\backslash\overline{\Omega}\end{cases}$ solves \eqref{eq:weakform}. 
\bigskip

In order to obtain a computable formulation, we start by cutting the problem from the infinite cylinder $\R^d \times \R^+$ to a finite
cylinder in the $y$-direction. To do so, we fix a parameter $\YY >0$ to be chosen later and
introduce the truncated bilinear forms
\begin{align*}
  {A}_{\Omega}^{\YY}(\UU,\VV)&:=
  \int_{0}^{\YY}{y^\alpha \int_{\Omega}\mathfrak{A}_x(x)\nabla \UU \cdot \nabla \VV \;dx dy} + s d_{\beta} \int_{\Omega}{\trace \UU \trace \VV\,dx}, \\
    {A}_{\R^d\backslash\Gamma}^{\YY}(\UU,\VV)&:=
  \int_{0}^{\YY}{y^\alpha \int_{\R^d\backslash\Gamma}{\nabla \UU \cdot \nabla \VV} \;dx dy} + s d_{\beta} \int_{\R^d\backslash\Gamma}{\trace \UU \trace \VV\,dx}.
\end{align*}

The ``big'' bilinear form is then given by
\begin{align*}
  B^{\YY}\big((\UU_\Omega,\UU_\star),(\VV_{\Omega},\VV_\star)\big)
  :={A}_{\Omega}^{\YY}(\UU_{\Omega},\VV_{\Omega}) + {A}_{\R^d \setminus \Gamma}^{\YY}(\UU_{\star},\VV_{\star}),
\end{align*}
and the cutoff problem reads as: Find
$\UU^\YY = (\UU^\YY_\Omega,\UU^\YY_\star) \in \HH_{\YY}$  such that
\begin{align}\label{eq:truncated_BLF_eq}
  B^{\YY}\big(\UU^\YY,\VV^\YY\big)
  =d_\beta\big(f,\trace{\VV^\YY_\Omega}\big)_{L^2(\R^d)} \quad 
  \text{for all } \VV^\YY=(\VV^\YY_\Omega,\VV^\YY_\star) \in \HH_{\YY}.
\end{align}

By the following theorem, we obtain well-posedness of the weak formulation of both variational formulations.

\begin{theorem}
  \label{prop:cont_well_posedness}
Assume either $d=3$ or $s>0$. 
Then, problem \eqref{eq:weak_problem_stat} has a unique solution $\UU \in \HH_{\infty}$ 
satisfying 
  \begin{align*}
    \norm{\UU}_{\HH_\infty} &\leq C \min(1, s^{-1}) \norm{f}_{L^2(\Omega)}.
  \end{align*} 
Fix $\YY \in (0,\infty)$. Then, the truncated problem \eqref{eq:truncated_BLF_eq} has a unique solution $\UU^{\YY}\in \HH_{\YY}$, for which the estimate
  \begin{align*}
    \norm{\UU^\YY}_{\HH_\YY} &\leq C \left(1+\frac{1}{\YY}\right) \min(1, s^{-1}) \norm{f}_{L^2(\Omega)}
  \end{align*} 
holds.
  Additionally, the bilinear forms in \eqref{eq:weak_problem_stat} and \eqref{eq:truncated_BLF_eq} are coercive.
\end{theorem}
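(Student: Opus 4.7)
The plan is to apply the Lax--Milgram lemma to both the untruncated problem on $\HH_\infty$ and the truncated problem on $\HH_\YY$. Continuity of the bilinear forms $B$ and $B^\YY$ and of the right-hand-side functional follows directly from the Cauchy--Schwarz inequality, the fact that $\mathfrak{A}_x$ is bounded, and the trace estimates~\eqref{eq:trace}; the $L^2(\Omega)$ inner product with $f$ is controlled using $\|\trace \VV_\Omega\|_{L^2(\Omega)} \le \|\trace \VV_\Omega\|_{L^2(\R^d)}$ together with either $s\,\|\trace \VV_\Omega\|_{L^2}^2$ (yielding an $s^{-1/2}$ factor) or the gradient-based trace estimate (yielding a factor independent of $s$), which is what produces the $\min(1,s^{-1})$ scaling in the final bound. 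The real work is therefore to prove coercivity; once this is done, existence, uniqueness and the stated a priori estimates all follow automatically from Lax--Milgram, and the solution-equivalence stated earlier in the text identifies the resulting pair with the solution of~\eqref{eq:weakform}.

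For coercivity on $\HH_\infty$, testing $B$ against $\UU=(\UU_\Omega,\UU_\star)$ immediately yields the full weighted $H^1$-seminorm of $\UU_\Omega$ (using ellipticity of $\mathfrak{A}$ and Poincar\'e on $\Omega\times \R^+$ in the $y$-direction plus the $s$-weighted trace, or Hardy in $y$ when $s=0$) and the full gradient contribution to $\UU_\star$; what is missing is the $\rho^{-2}$-weighted $L^2$-part of $\|\UU_\star\|_{H^1_\rho(y^\alpha,\R^d\setminus\Gamma\times \R^+)}^2$. The hard step is a weighted Poincar\'e-type inequality of the form
\begin{equation*}
\int_0^\infty\!\!\int_{\R^d} y^\alpha \rho(x,y)^{-2} |\UU_\star(x,y)|^2\, dx\, dy \;\lesssim\; \int_0^\infty\!\!\int_{\R^d\setminus\Gamma} y^\alpha |\nabla \UU_\star|^2\, dx\, dy + s\,\|\trace \UU_\star\|_{L^2(\R^d)}^2,
\end{equation*}
which uses the boundary condition $\gamma_\Gamma^-\UU_\star=0$ on the internal boundary to pin the function down. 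In the case $d=3$ with $s=0$ the second estimate in~\eqref{eq:trace} provides the decay needed at infinity on the trace; combined with a one-dimensional Hardy inequality in $y$ this controls the $\rho^{-2}$-bulk term. In the case $s>0$ the $L^2(\R^d)$-term on the trace substitutes for the missing Hardy decay, at the price of an $s^{-1}$ factor. I expect this weighted Poincar\'e step to be the main obstacle, and the paper flags that the proof is deferred to Section~3.

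For the truncated bilinear form $B^\YY$ the same decomposition is used, but the Poincar\'e inequality on the finite cylinder in the $y$-variable is applied: for functions on $\R^d\setminus\Gamma\times(0,\YY)$ one controls
\begin{equation*}
\int_0^\YY\!\!\int_{\R^d} y^\alpha \rho(x,y)^{-2}|\UU_\star^\YY|^2\,dx\,dy \;\lesssim\; \Bigl(1+\tfrac{1}{\YY}\Bigr)^2\Bigl(\int_0^\YY\!\!\int_{\R^d\setminus\Gamma} y^\alpha |\nabla \UU_\star^\YY|^2\,dx\,dy + s\,\|\trace\UU_\star^\YY\|_{L^2(\R^d)}^2\Bigr),
\end{equation*}
where the $(1+1/\YY)$ factor reflects the degeneration of the Poincar\'e constant as the cylinder height shrinks. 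This is exactly what produces the $(1+1/\YY)$ prefactor in the stability bound. The rest of the argument is identical to the untruncated case: coercivity on $\HH_\YY$ is obtained, Lax--Milgram yields a unique $\UU^\YY\in \HH_\YY$, and combining coercivity with the continuity estimate of the right-hand side (controlled either by the gradient-based trace bound or by the $s$-weighted trace norm) delivers the claimed bound with the $\min(1,s^{-1})$ factor. Coercivity of $B$ and $B^\YY$ is now a by-product of the above estimates.
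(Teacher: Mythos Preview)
Your overall Lax--Milgram strategy matches the paper, but the decoupled treatment of $\UU_\Omega$ and $\UU_\star$ in the coercivity step has a real gap. You propose to control the full $H^1(y^\alpha,\Omega\times(0,\YY))$-norm of $\UU_\Omega$ from its own gradient together with the $s$-weighted trace (or ``Hardy in $y$'' when $s=0$). This cannot work: take $\UU_\Omega(x,y)=\psi(y)$ with $\psi$ supported in $(M,2M)$ for large $M<\YY$; then $\trace\UU_\Omega=0$, while $\|\UU_\Omega\|_{L^2(y^\alpha)}/\|\partial_y\UU_\Omega\|_{L^2(y^\alpha)}\sim M$, so no uniform Poincar\'e holds on the bounded-in-$x$ cylinder. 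The paper's cure is to \emph{recombine} $\UU_\Omega$ (in $\Omega$) with $\UU_\star$ (in $\R^d\setminus\overline\Omega$) into one function $\UU^\YY$ on $\R^d\setminus\Gamma\times(0,\YY)$ and apply the weighted Poincar\'e inequality of Lemma~\ref{lemma:my_poincare}, namely $\int y^\alpha \rho^{-2}|\UU^\YY|^2 \lesssim \int y^\alpha |\nabla\UU^\YY|^2$ (plus, for $d=2$, the trace term). The unboundedness in $x$ supplies the decay via $\rho^{-2}$, and since $\rho\simeq 1$ on $\Omega$ this indeed recovers $\|\UU_\Omega\|_{L^2(y^\alpha,\Omega\times(0,\YY))}$. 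The trace/jump constraints $\gamma^-_\Gamma\UU_\star=0$, $\tracejump{\UU_\star}=\gamma^-_\Gamma\UU_\Omega$ are exactly what tie the pieces together; discarding that coupling is what breaks your argument. The remaining piece, $\UU_\star$ restricted to $\Omega$, is controlled by an ordinary Poincar\'e using $\gamma^-_\Gamma\UU_\star=0$.

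A second correction: in the paper the coercivity constant is \emph{uniform} in $\YY$. The factor $(1+1/\YY)$ enters only through the bound on the linear functional $\VV\mapsto(f,\trace\VV_\Omega)_{L^2}$ in the case $s=0$, $\YY<\infty$, where one multiplies by a cutoff $\chi(y)$ with $\|\chi'\|_\infty\lesssim\YY^{-1}$ and invokes a finite-cylinder trace estimate. Your attribution of the $(1+1/\YY)$ to a degrading Poincar\'e constant would, via Lax--Milgram, yield $(1+1/\YY)^2$ rather than $(1+1/\YY)$.
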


The proof of the theorem is given in Section~\ref{sec:well-posedness} and,  in fact, reduces to the application of suitable Poincar\'e inequalities.


\subsection{The discrete scheme}

In this section, we describe our discrete scheme to approximate solutions to the truncated variational formulation \eqref{eq:truncated_BLF_eq}. The main idea is to employ a tensor product structure for the approximation by using the diagonalization procedure described in \cite{tensor_fem}.
\bigskip

Let $\VHy$ be an arbitrary finite dimensional subspace of $L^2(y^\alpha,(0,\YY))$ of dimension $N_y+1$.
Following the ideas of~\cite{tensor_fem},
we chose an orthonormal basis $(\varphi_{j})_{j=0}^{N_y}$ of $\VHy$  in $L^2(y^\alpha,(0,\YY))$
and generalized eigenvalues $\mu_j \geq 0$ satisfying
\begin{align}\label{eq:eigenvalue_problem}
  \int_{0}^{\mathcal{Y}}{y^\alpha  \varphi_i' \varphi_j' \,dy} + s \varphi_i(0)\varphi_j(0) 
  = \mu_j \int_{0}^{\mathcal{Y}}{y^\alpha \varphi_i \varphi_j\,dy}  = \mu_j\delta_{ij} \qquad \forall \; 0\leq i,j\leq N_y.
\end{align}
It is easy to see that for $s=0$, the constant function is an eigenfunction corresponding
  to the
  eigenvalue $\mu=0$. Moreover, the assumption $s>0$ for $d=2$ guarantees that there is not a zero eigenvalue. If zero is an eigenvalue (for $d=3$), we assume that the eigenvalues are ordered such that $\mu_0=0$. 
\bigskip

We now give a formal definition of our (semi-)discrete subspace of $\HH_\YY$, which has tensor product structure with respect to the variables $x,y$.
\begin{definition}
Let $\VHx \subset H^1(\Omega)$ and $\VHl \subset H^{-1/2}(\Gamma)$ be finite dimensional spaces and $\YY \in (0,\infty)$.  Additionally, assume that $1 \in \VHl$.  We introduce the closed subspace $\HH_{h,\YY} \subset \HH_\YY$ as 
  \begin{alignat}{6}
    \HH_{h,\YY}:=\operatorname*{cls}\Big\{ \UU_h=(\UU_\Omega,\UU_\star) \in \HH_\YY: \quad
    &\UU_\Omega(x,y) = \sum_{j=0}^{N_y}{ u_{j,\Omega}(x) \varphi_{j}(y)} \text{ with } u_{j,\Omega} \in \VHx,  \nonumber \\
    &  \UU_{\star}(x,y)=\sum_{j=0}^{N_y}{ u_{j,\star}(x) \varphi_{j}(y)} \text{ with } u_{j,\star} \in H^1_{\rho_x}(\R^d \setminus \Gamma), \nonumber\\
    &\tracejump{u_{j,\star}} = \gamma^-_{\Gamma} u_j,
                             \quad \gamma^-_{\Gamma} u_{j,\star} \in (\VHl)^\circ
    \Big\}.
  \end{alignat}
\end{definition}

Then, the semi-discrete problem reads as:
Find
$\UU_h^\YY = (\UU^\YY_\Omega,\UU^\YY_\star) \in \HH_{h,\YY}$  such that
\begin{align}\label{eq:truncated_BLF_eq_discrete}
  B^{\YY}\big(\UU_h^\YY,\VV_h^\YY\big)
  =d_\beta\big(f,\trace{\VV^\YY_\Omega}\big)_{L^2(\R^d)} \quad 
  \text{for all } \VV_h^\YY=(\VV^\YY_\Omega,\VV^\YY_\star) \in \HH_{h,\YY}.
\end{align} 

Using the orthogonal basis for the $y$-direction, we can actually diagonalize some of the bilinear forms and obtain
an equivalent sequence of scalar problems. 
In fact,   functions $(\UU_\Omega,\UU_\star) \in \HH_{h,\YY}$ solve \eqref{eq:truncated_BLF_eq_discrete}, if and only if
  they can be written as
  \begin{align}
    \label{eq:y_decompotistion}
  \UU_{\Omega}(x,y)=\sum_{j=0}^{N_y}{ u_{j,\Omega}(x) \varphi_{j}(y)}, \qquad \UU_{\star}(x,y)=\sum_{j=0}^{N_y}{ u_{j,\star}(x) \varphi_{j}(y)},
  \end{align}
  with $u_{j,\Omega} \in \VHx$, $u_{j,\star} \in H^1_{\rho_x}(\R^d \setminus \Gamma)$,
  where the functions $u_{j,\Omega}$, $u_{j,\star}$ solve 
  \begin{subequations}
    \label{eq:diagonalized_eqns1}
\begin{multline}
    \big(\mathfrak{A}\nabla u_{j,\Omega}, \nabla v\big)_{L^2(\Omega)}
    +  \big(\mu_j u_{j,\Omega}, v\big)_{L^2(\Omega)}
        - \big<\partial_{\nu,\Gamma}^-u_{j,\star},\gamma^-_{\Gamma} v\big>_{L^2(\Gamma)} \\
    = d_{\beta}\varphi_j(0)(f, v)_{L^2(\Omega)}
    \quad \forall v \in \VHx,
    \label{eq:diagonalized_eqns_inside1}
  \end{multline}
  and
  \begin{align}
    -\laplace u_{j,\star} + \mu_j u_{j,\star} &=0  \qquad \text{in $\R^d \setminus \Gamma$}
    \label{eq:diagonalized_eqns_outside1}\\
    \tracejump{u_{j,\star}}&= \gamma^-_{\Gamma} u_j,
                             \quad \gamma^-_{\Gamma} u_{j,\star} \in (\VHl)^\circ.
                             \label{eq:diagonalized_eqns_traces1}
  \end{align}
\end{subequations}
We refer to Lemma~\ref{lemma:y_decomposition} for a proof of this statement.
\bigskip

The equation for $u_{j,\star}$ is still posed on an unbounded domain.  We
will replace this with boundary integral equations. Therefore,
given $\mu \in \C$ with $\Re(\mu)\geq 0$, we introduce
\begin{align*}
  G(z;\mu):=\begin{cases}
    \frac{\ii}{4} H_0^{(1)}\left(\ii \mu \abs{z}\right), & \text{ for $d=2$,} \\
      \frac{e^{-\mu\abs{z}}}{4 \pi \abs{z}}, & \text{ for $d = 3$,}
    \end{cases}
                                               \quad
                                               \text{for $\mu \neq 0$ and}                                             
   \quad
  G(z;0):=\begin{cases}
    \frac{-1}{2\pi} \ln(\abs{z}), & \text{ for $d=2$,} \\
      \frac{1}{4 \pi \abs{z}}, & \text{ for $d = 3$,}
    \end{cases} 
\end{align*}
where $H_0^{(1)}$ denotes the first kind Hankel function of order $0$. The single-layer and double-layer potential 
are then defined as 
\begin{align*}
  \big(\widetilde{V}(\mu) \varphi \big)\left(x\right)&:=\int_{\Gamma}{G(x-z;\mu) \varphi(z) \;dz}, \qquad
  \big(\widetilde{K}(\mu) \psi \big)\left(x\right):=\int_{\Gamma}{\partial^-_{\nu,\Gamma}G(x-z;\mu) \psi(z) \;dz}, 
\end{align*}
and the corresponding boundary integral operators are given by
\begin{alignat}{4}
  V(\mu)&:=\gamma^\pm_{\Gamma} \widetilde{V}(\mu), &\quad  K(\mu)&:=\frac{1}{2}(\gamma^+_{\Gamma} \widetilde{V}(\mu) + \gamma^-_{\Gamma} \widetilde{V}(\mu)), \\
  K^t(\mu)&:=\frac{1}{2}(\partial_{\nu.\Gamma}^+ \widetilde{K}(\mu) + \partial_{\nu,\Gamma}^- \widetilde{K}(\mu)), &\quad  W(\mu)&:=- \partial_{\nu,\Gamma}^- \widetilde{K}(\mu).
\end{alignat}
We then have the following result, giving a computable approximation of~\eqref{eq:stat_model_prob} that
only relies on well-known operators.

\begin{theorem}
  \label{pro:diagonalized_couplings}
Let $\varphi_j,\mu_j$ be the generalized eigenfunctions and eigenvalues from \eqref{eq:eigenvalue_problem}. 
  For all $j=0,\dots, N_y$,
  let $(u_j,\lambda_j) \in \VHx\times \VHl$ solve
  \begin{subequations}
    \label{eq:diagonalized_coupling_eqs}
  \begin{align}
    \big(\mathfrak{A}\nabla u_{j}, \nabla v_h\big)_{L^2(\Omega)}
    +  \big(\mu_j u_{j}, v_h\big)_{L^2(\Omega)}  
    +
    \big<W(\mu_j) \gamma^-_{\Gamma} u_{j}
    &+ (-1/2+ K'(\mu_j))\lambda_j,
    \gamma^-_{\Gamma} v_h\big>_{L^2(\Gamma)}  \nonumber   \\
    &=d_\beta \varphi_j(0) \big(f,v_h\big)_{L^2(\Omega)}, \\
    \big<(1/2-K(\mu_j))\gamma^-_{\Gamma} u_j,\xi_h\big>_{L^2(\Gamma)}
    + \big<V(\mu_j) \lambda_j,\xi_h\big>_{L^2(\Gamma)}&=0
  \end{align}
\end{subequations}
for all $v_h \in \VHx$ and $\xi_h \in \VHl$.
Then,
\begin{align*}
  \UU_{\Omega}(x,y):=\sum_{j=0}^{N_y}{u_j(x) \varphi_j(y)},
  \qquad
  \UU_{\star}(x,y):=\sum_{j=0}^{N_y}{
  \Big(\widetilde{V}(\mu_j) \lambda_j (x)
  -\widetilde{K}(\mu_j)\gamma^-_{\Gamma} u_j(x)
  \Big) 
  \varphi_j(y)}
\end{align*}
solves~\eqref{eq:truncated_BLF_eq_discrete}. We thus have a computable representation of our
discrete approximation.
\end{theorem}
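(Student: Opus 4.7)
The plan is to verify the hypotheses of Lemma~\ref{lemma:y_decomposition}, which (per the excerpt) reduces the semi-discrete problem \eqref{eq:truncated_BLF_eq_discrete} to the diagonalized system \eqref{eq:diagonalized_eqns1}. So I would show that the functions $u_{j,\Omega}:=u_j$ and $u_{j,\star}:=\widetilde{V}(\mu_j)\lambda_j-\widetilde{K}(\mu_j)\gamma^-_\Gamma u_j$ defined from the solution of the symmetric coupling \eqref{eq:diagonalized_coupling_eqs} satisfy each of the three blocks \eqref{eq:diagonalized_eqns_inside1}--\eqref{eq:diagonalized_eqns_traces1}, after which the assembly \eqref{eq:y_decompotistion} produces the claimed $\UU_\Omega,\UU_\star$.

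First I would check that $u_{j,\star}$ lives in the correct space. Since $G(\cdot;\mu_j)$ is a fundamental solution of $-\Delta+\mu_j$, the potentials satisfy $(-\Delta+\mu_j)u_{j,\star}=0$ pointwise on $\R^d\setminus\Gamma$, which is \eqref{eq:diagonalized_eqns_outside1}. For $\mu_j>0$ the exponential (resp.\ Hankel) decay of $G(\cdot;\mu_j)$ immediately gives $u_{j,\star}\in H^1_{\rho_x}(\R^d\setminus\Gamma)$. The delicate case is $\mu_0=0$ (which occurs for $s=0$, $d=3$), where $G$ decays only like $|x|^{-1}$; here membership in $H^1_{\rho_x}$ follows from the standard decay estimates for the Laplace single/double layer potentials combined with the assumption $1\in\VHl$, which together with the second line of \eqref{eq:diagonalized_coupling_eqs} forces $\int_\Gamma\lambda_0\,ds=0$, killing the $\log$-type (in $d=2$) or slowest-decaying (in $d=3$) contribution to the single-layer potential at infinity.

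Next I would verify the trace/jump constraints \eqref{eq:diagonalized_eqns_traces1}. Using the classical jump relations $\tracejump{\widetilde V(\mu)\lambda}=0$, $\normaljump{\widetilde V(\mu)\lambda}=-\lambda$, $\tracejump{\widetilde K(\mu)\psi}=\psi$, $\normaljump{\widetilde K(\mu)\psi}=0$ (with the sign convention of the excerpt) I obtain
\begin{align*}
\tracejump{u_{j,\star}}&=-\tracejump{\widetilde K(\mu_j)\gamma^-_\Gamma u_j}\cdot(-1)=\gamma^-_\Gamma u_j,\\
\gamma^-_\Gamma u_{j,\star}&=V(\mu_j)\lambda_j+\bigl(\tfrac{1}{2}-K(\mu_j)\bigr)\gamma^-_\Gamma u_j.
\end{align*}
The second coupling equation in \eqref{eq:diagonalized_coupling_eqs} then states precisely $\langle \gamma^-_\Gamma u_{j,\star},\xi_h\rangle_{L^2(\Gamma)}=0$ for all $\xi_h\in\VHl$, i.e.\ $\gamma^-_\Gamma u_{j,\star}\in(\VHl)^\circ$.

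It remains to recover \eqref{eq:diagonalized_eqns_inside1}. Applying $\partial^-_{\nu,\Gamma}$ to the representation formula and using $\partial^-_{\nu,\Gamma}\widetilde V(\mu)\lambda=(\tfrac{1}{2}+K^t(\mu))\lambda$ together with $\partial^-_{\nu,\Gamma}\widetilde K(\mu)\psi=-W(\mu)\psi$ yields
\begin{align*}
\partial^-_{\nu,\Gamma}u_{j,\star}=W(\mu_j)\gamma^-_\Gamma u_j+\bigl(\tfrac{1}{2}+K^t(\mu_j)\bigr)\lambda_j-\lambda_j=W(\mu_j)\gamma^-_\Gamma u_j+\bigl(-\tfrac12+K^t(\mu_j)\bigr)\lambda_j,
\end{align*}
where the extra $-\lambda_j$ comes from the fact that $\partial^-_{\nu,\Gamma}$ is the one-sided interior trace of the normal derivative (equivalently using $\normaljump{\widetilde V}=-\lambda$). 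Substituting this expression into \eqref{eq:diagonalized_eqns_inside1} reproduces the first equation of \eqref{eq:diagonalized_coupling_eqs} line by line, identifying $K^t=K'$. Hence $(u_{j,\Omega},u_{j,\star})$ solves \eqref{eq:diagonalized_eqns1}, Lemma~\ref{lemma:y_decomposition} applies, and $(\UU_\Omega,\UU_\star)$ is a solution of \eqref{eq:truncated_BLF_eq_discrete}. The main obstacle I anticipate is bookkeeping the $\pm\tfrac12$ signs in the jump relations consistently with the paper's convention $\tracejump{u}=\gamma^-u-\gamma^+u$ and, for the $\mu_0=0$ case, confirming that the condition $1\in\VHl$ genuinely provides the compatibility needed to place $u_{0,\star}$ in $H^1_{\rho_x}$ rather than only in $H^1_{\mathrm{loc}}$.
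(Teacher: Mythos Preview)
Your approach is essentially the same as the paper's: both reduce via Lemma~\ref{lemma:y_decomposition} to the diagonalized transmission problems, then define $u_{j,\star}$ through the layer-potential ansatz and check that the classical jump relations turn \eqref{eq:diagonalized_eqns1} into the symmetric coupling equations \eqref{eq:diagonalized_coupling_eqs}. The paper is terser---it simply cites \cite[Section~7]{LS09} for the potential-theoretic identities---whereas you write out the traces and normal derivatives explicitly; but the logical route is identical.

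One small correction concerns the $\mu_0=0$ case. Your claim that the second line of \eqref{eq:diagonalized_coupling_eqs} with $\xi_h=1$ forces $\int_\Gamma\lambda_0\,ds=0$ is not right: testing with $\xi_h=1$ gives $\langle(1/2-K(0))\gamma^-_\Gamma u_0,1\rangle+\langle V(0)\lambda_0,1\rangle=0$, which does not isolate $\int_\Gamma\lambda_0$. Fortunately this compatibility condition is not needed. Since $s=0$ forces $d=3$ by the paper's standing assumptions, the Laplace single-layer potential already decays like $|x|^{-1}$ and its gradient like $|x|^{-2}$; both $\rho_x^{-2}|u_{0,\star}|^2$ and $|\nabla u_{0,\star}|^2$ are then $O(|x|^{-4})$, which is integrable in $\R^3$. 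That is precisely how the paper argues: it invokes the decay of the Laplace layer potentials directly to place $u_{0,\star}\in H^1_{\rho_x}(\R^d\setminus\Gamma)$, without any vanishing-moment condition on $\lambda_0$.
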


  The problems \eqref{eq:diagonalized_coupling_eqs} are standard FEM-BEM coupling problems
  for what is often called the modified Helmholtz or Yukawa equation. As such, existence and uniqueness of solutions $(u_j,\lambda_j) \in \VHx \times \VHl$ is well-known, see \cite[Sect. 7]{LS09}. Consequently, we also obtain  well-posedness of the semi-discrete formulation \eqref{eq:truncated_BLF_eq_discrete} as we have constructed a solution in $\HH_{h,\YY}$. Uniqueness follows from coercivity of the bilinear form.

\begin{corollary}
  \label{prop:discrete_well_posedness}
 Fix $\YY \in (0,\infty)$. 
  Let $\VHx \subseteq H^1(\Omega)$, $\VHl \subseteq H^{-1/2}(\Gamma)$,  
  $\VHy \subseteq H^1(y^\alpha,(0,\YY))$ be finite dimensional subspaces.
  Assume that $1 \in \VHl$, i.e., the space $\VHl$ contains the constant functions, and
  either $d=3$ or $s>0$.
Then, the truncated problem \eqref{eq:truncated_BLF_eq_discrete} has a unique solution $\UU_h^{\YY}\in \HH_{h,\YY}$.
\end{corollary}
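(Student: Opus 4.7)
The plan is to obtain the corollary by combining two ingredients already prepared in the text: existence from the explicit construction in Theorem~\ref{pro:diagonalized_couplings}, and uniqueness from the coercivity of $B^{\YY}$ on $\HH_\YY$ asserted in Theorem~\ref{prop:cont_well_posedness}.

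For existence, I would proceed indexwise in $j=0,\dots,N_y$. The system \eqref{eq:diagonalized_coupling_eqs} is precisely the symmetric (Costabel/Han) FEM-BEM coupling for the Yukawa/modified Helmholtz operator $-\Delta + \mu_j$ on the pair $(\VHx, \VHl)$. For $\mu_j>0$, unique solvability of the discrete coupling is classical: the $\mu_j$-term in the interior bilinear form provides $H^1(\Omega)$-coercivity of the volume contribution, while $V(\mu_j)$ is uniformly $H^{-1/2}(\Gamma)$-elliptic, so the standard arguments of \cite[Sect.~7]{LS09} apply directly. Once $(u_j,\lambda_j)$ is obtained for each $j$, Theorem~\ref{pro:diagonalized_couplings} assembles an explicit function $\UU_h^{\YY}\in \HH_{h,\YY}$ that solves \eqref{eq:truncated_BLF_eq_discrete}.

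I expect the main technical point to be the case $\mu_0=0$, which under the hypothesis of the corollary can only arise when $d=3$ and $s=0$. There \eqref{eq:diagonalized_coupling_eqs} degenerates to the classical symmetric FEM-BEM coupling for the Laplacian, and one must appeal to the well-known fact that the assumption $1\in \VHl$ is the standard stabilization that rules out spurious kernels and restores unique solvability of this discrete scheme (cf.\ \cite{sayas09,steinbach11,affkmp13}). This is the single place in the argument where both the hypothesis $1\in\VHl$ and the exclusion of the case $d=2$, $s=0$ genuinely enter.

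Uniqueness follows in one line. Taking the difference $W := \UU_h^{\YY} - \widetilde{\UU}_h^{\YY}$ of two putative solutions, we have $W \in \HH_{h,\YY}$ and $B^{\YY}(W,\VV)=0$ for all $\VV\in \HH_{h,\YY}$. Since $\HH_{h,\YY}$ is a closed subspace of $\HH_{\YY}$ by its definition as a closure, the coercivity of $B^{\YY}$ on $\HH_{\YY}$ from Theorem~\ref{prop:cont_well_posedness} restricts to $\HH_{h,\YY}$, and testing with $\VV=W$ gives $W=0$.
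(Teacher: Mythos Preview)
Your proposal is correct and follows exactly the paper's approach: existence via the explicit construction of Theorem~\ref{pro:diagonalized_couplings} (relying on well-posedness of the diagonalized FEM-BEM coupling problems, citing \cite[Sect.~7]{LS09}) and uniqueness via the coercivity of $B^{\YY}$ from Theorem~\ref{prop:cont_well_posedness}. Your added discussion of the $\mu_0=0$ case and the role of the hypothesis $1\in\VHl$ in ruling out the spurious kernel of the symmetric Laplace coupling is a useful clarification that the paper leaves implicit.
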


  \begin{remark}
  Due to the construction in Theorem~\ref{pro:diagonalized_couplings}, we mention that our discrete approximation can
 very easily be computed with the use of existing FEM/BEM libraries. We refer to Section~\ref{sec:numerics} for a description of the implementation used in the numerical examples therein.
\end{remark}

\subsection{A-priori convergence estimates}
In the extended variable $y$, we employ a $hp$-FEM discretization. 
\bigskip

  Let $\YY >0$ and   $\TT_y$ be a geometric grid on $(0,\YY)$ with mesh grading factor $\sigma$,
  $L$-refinement layers towards $0$, and $M = \lfloor \ln(\YY)/\ln(\sigma) \rfloor$ levels of growth towards $\YY$. More precisely, we define the grid points as
  \begin{align} \label{eq:geometric_mesh}
    x_0:=0, \qquad  x_{\ell}:=\sigma^{L-\ell} \;\text{ for $\ell=0,\dots, L+M$,} 
    \qquad  x_{L+M+1}:=\YY.
  \end{align} 
  By
$$
\SS^{p,1}(\TT_y) := \{u \in C(0,\YY) \; :\; u|_{(x_\ell,x_{\ell+1})} \in P_p\; \forall \ell=0,\dots,L+M\}
$$ 
we denote the space of continuous, piecewise polynomials  of degree up to $p$.
\bigskip

The following proposition provides a best-approximation estimate for the $hp$-semi-discretization in $y$. We note that in contrast to \cite{tensor_fem}, which exploits a known closed form representation of the solution, we only have algebraic convergence of the truncated solution rather than exponential convergence. However, choosing the truncation parameter large enough, the $hp$-semi-discretization in $y$ allows to recuperate any algebraic convergence rates of the discretization in $x$.

\begin{theorem}[Best-Approximation] \label{prop:bestApproximation} Let $\YY \in (0,\infty)$. Let $\UU$ solve \eqref{eq:weak_problem_stat} and $\UU^\YY = (\UU^\YY_\Omega,\UU^\YY_\star)$ solve the cutoff problem~\eqref{eq:truncated_BLF_eq}.
Set  $\lambda:=\partial_\nu^+ \UU^{\YY}_\star$.
Let $\TT_y$ be a geometric grid on $(0,\YY)$.
  Let $\UU_h^\YY$ solve \eqref{eq:truncated_BLF_eq_discrete} with arbitrary finite dimensional subspaces $\VHx \subseteq H^1(\Omega)$, $\VHl \subseteq H^{-1/2}(\Gamma)$ and 
the choice $\VHy  := \SS^{p,1}(\TT_y)$. Let $\pi_\Omega : L^2(\Omega) \rightarrow \VHx$ be an arbitrary linear operator that is stable in $L^2(\Omega)$ and $H^1(\Omega)$.
  Then, for any $\lambda_h: \R_+ \to  \VHl$, there exist $\varepsilon >0$, $\kappa>0$ such that there holds
  \begin{align*}
    \|\UU - \UU_h^\YY\|_{\HH_\YY}^2
    &\lesssim
      \int_{0}^{\YY}{
      y^\alpha\Big( \|(I- \pi_\Omega) \UU^\YY_{\Omega}(y) \|_{H^1(\Omega)}^2
      +\|\lambda(y) -\lambda_h(y)\|_{H^{-1/2}(\Gamma)}^2 \Big) dy} \\
    &\qquad+ 
     \YY^{2\varepsilon} e^{-2\kappa p}+ \YY^{-\mu}\norm{f}_{L^2(\R^d)}^2 
  \end{align*}
  with   $    \mu:=
    \begin{cases}
      1+\abs{\alpha} & \text{ for } s>0 \\
       1+\alpha & \text{ for }  s = 0
    \end{cases}
$
and all constants independent of $\YY,p$.
\end{theorem}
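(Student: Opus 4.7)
The natural strategy is to split
\[
\|\UU - \UU_h^\YY\|_{\HH_\YY}
\;\le\; \|\UU - \UU^\YY\|_{\HH_\YY} + \|\UU^\YY - \UU_h^\YY\|_{\HH_\YY}
\]
and handle the two contributions separately: a truncation error in $y$ producing the term $\YY^{-\mu}\|f\|_{L^2(\R^d)}^2$, and a genuine discretisation error treated by a C\'ea-type best approximation argument. The decay/regularity results of the companion paper~\cite{part_one} furnish both the algebraic decay of $\UU$ as $y\to\infty$ (needed for truncation) and the analytic regularity of $\UU^\YY$ in the $y$-variable (needed for $hp$-convergence).

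\textbf{Truncation error.} For the first term, the difference $\UU-\UU^\YY$ (extended appropriately by zero) can be estimated by considering the tail $(\YY,\infty)$ in the weighted norm. Using the decay estimates from~\cite{part_one}, one shows that on $\{y>\YY\}$ the weighted integrals $\int_\YY^\infty y^\alpha(\cdot)\,dy$ are controlled by $\YY^{-\mu}$ times an $\HH_\infty$-norm of $\UU$, which, by the continuous stability in Theorem~\ref{prop:cont_well_posedness}, is bounded by $\|f\|_{L^2(\Omega)}$. The two cases $s>0$ and $s=0$ differ by whether the $L^2$-term on the trace survives, explaining the dichotomy in $\mu$.

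\textbf{Discretisation error.} For $\|\UU^\YY-\UU_h^\YY\|_{\HH_\YY}$, coercivity of $B^\YY$ on $\HH_\YY$ (from Theorem~\ref{prop:cont_well_posedness}) together with Galerkin orthogonality gives a quasi-best approximation
\[
\|\UU^\YY - \UU_h^\YY\|_{\HH_\YY}^2 \lesssim \inf_{\VV_h\in\HH_{h,\YY}} \|\UU^\YY - \VV_h\|_{\HH_\YY}^2 .
\]
A candidate $\VV_h$ must be built as a tensor product in $(x,y)$. In $y$, I would use a one-dimensional $hp$-FEM projector $\pi_y\colon H^1(y^\alpha,(0,\YY))\to \SS^{p,1}(\TT_y)$ which is simultaneously $H^1$- and weighted-$L^2$-stable and preserves the trace at $y=0$. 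Applying $\pi_y$ pointwise in $x$ and then combining with $\pi_\Omega$ in $x$ (for the interior) and with the given $\lambda_h$ together with the potential ansatz $\widetilde V(\mu_j)\lambda_{h,j}-\widetilde K(\mu_j)\gamma_\Gamma^- \pi_\Omega u_{j,\Omega}$ (for the exterior, guaranteeing membership in $H^1_{\rho_x}(\R^d\setminus\Gamma)$ and the correct jump/trace compatibility) yields an admissible element of $\HH_{h,\YY}$. The interior contribution produces exactly the term involving $(I-\pi_\Omega)\UU^\YY_\Omega$, while the exterior contribution, after using mapping properties of the single- and double-layer potentials, produces the $H^{-1/2}(\Gamma)$-best approximation term in $\lambda-\lambda_h$.

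\textbf{Main obstacle and the $hp$-factor.} The principal technical difficulty is the construction of $\pi_y$ on the geometric mesh with simultaneous stability in the weighted spaces and exponential convergence $\lesssim e^{-2\kappa p}$ for analytic functions, while respecting the trace constraint $\tracejump{\UU_\star}=\gamma_\Gamma^-\UU_\Omega$ (which couples interior and exterior through the $y$-projector). The polynomial prefactor $\YY^{2\varepsilon}$ arises because the constants in the analytic regularity statement of~\cite{part_one} (and in the corresponding $hp$-approximation estimate on geometric meshes reaching out to $\YY$) grow polynomially in $\YY$; one must keep track of these weights carefully. Once the approximation operator and its exponential error bound are in place, assembling the three pieces —truncation, $x$-approximation via $\pi_\Omega$ and $\lambda_h$, and $hp$-approximation in $y$— gives the claimed estimate.
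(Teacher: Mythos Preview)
Your overall decomposition into truncation plus discretisation error matches the paper, and your treatment of the truncation term is fine. However, there is a genuine gap in the discretisation step.

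\textbf{The C\'ea argument does not hold as stated.} You write that coercivity and Galerkin orthogonality give
\[
\|\UU^\YY - \UU_h^\YY\|_{\HH_\YY}^2 \lesssim \inf_{\VV_h\in\HH_{h,\YY}} \|\UU^\YY - \VV_h\|_{\HH_\YY}^2,
\]
but the scheme is \emph{nonconforming}: functions $\VV_h^\YY=(\VV_\Omega^\YY,\VV_\star^\YY)\in\HH_{h,\YY}$ satisfy only the relaxed condition $\gamma_\Gamma^-\VV_\star^\YY\in(\VHl)^\circ$, not $\gamma_\Gamma^-\VV_\star^\YY=0$ as required in $\HH_\YY$. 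Hence $\VV_h^\YY$ is not an admissible test function in~\eqref{eq:truncated_BLF_eq}, and inserting it into the continuous equation leaves a boundary residual. The paper establishes (Lemma~\ref{lemma:galerkin_ortho}) the modified orthogonality
\[
B^\YY(\UU^\YY-\UU_h^\YY,\VV_h^\YY)=\int_0^\YY y^\alpha\big\langle \normaljumps{\UU_\star^\YY}-\lambda_h,\;\gamma_\Gamma^-\VV_\star^\YY\big\rangle_{L^2(\Gamma)}\,dy,
\]
and it is precisely this consistency term that produces the contribution $\int_0^\YY y^\alpha\|\lambda-\lambda_h\|_{H^{-1/2}(\Gamma)}^2\,dy$ after a trace estimate and Young's inequality. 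In your proposal this term is attributed instead to the construction of the candidate $\VV_h$, which misidentifies its origin.

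\textbf{The candidate $\VV_h$ is built differently.} You propose constructing the exterior component of $\VV_h$ via layer potentials $\widetilde V(\mu_j)\lambda_{h,j}-\widetilde K(\mu_j)\gamma_\Gamma^-\pi_\Omega u_{j,\Omega}$. This presupposes a decomposition of the continuous solution $\UU^\YY$ into the discrete eigenmodes $\varphi_j$, which it does not possess, and would make estimating $\|\UU_\star^\YY-\VV_{h,\star}\|$ awkward. The paper instead constructs an interpolant $\Pi_x$ on pairs $(u,u_\star)$ by setting $u^h=\pi_\Omega u$ and correcting $u_\star$ additively using the Stein extension of $u-\pi_\Omega u$ (Lemma~\ref{lemma:interp_x}); this keeps the jump condition $\tracejump{u_\star^h}=\gamma_\Gamma^- u^h$, forces $\gamma_\Gamma^- u_\star^h=0\in(\VHl)^\circ$, and reduces the exterior error directly to $\|(I-\pi_\Omega)u\|_{H^1(\Omega)}$. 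The full interpolant is then the tensor product $\Pi_x\otimes\Pi_y$ (Lemma~\ref{lemma:tensor_interpolation}), and the $\lambda-\lambda_h$ term never enters its error at all.
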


\begin{remark}
 A possible choice for the spatial discretization would be $\VHx := \SS^{1,1}(\TT_x)$, i.e., continuous, piecewise linear polynomials on some (quasi-uniform) mesh $\TT_x$ of $\Omega$.
 For the operator $\pi_\Omega$ one could take the Scott-Zhang projection mapping onto $\SS^{1,1}(\TT_x)$, see \cite{ScottZhang}. In addition to the required $L^2(\Omega)$- and $H^1(\Omega)$-stabilities, the operator has first order approximation properties in $H^1(\Omega)$, provided the input function is sufficiently regular. 
\end{remark}

Using first-order approximation properties of the Scott-Zhang projection together with best-approximation of the BEM part (which converges of order $h^{3/2}$ assuming sufficient regularity, see \cite{sauter_schwab}) and correct choice of the cut-off parameter $\YY$ and polynomial degree $p$, the best-approximation estimate for the semi-discretization in Theorem~\ref{prop:bestApproximation} directly gives first order convergence in $h$.

\begin{corollary}\label{cor:convergenceRates}
  Let the assumptions of Theorem~\ref{prop:bestApproximation} hold.
  Assume $\mathfrak{A} \in C^{1}(\R^d,\R^{d\times d})$ and $f \in H^1(\Omega)$
  and assume $\Omega$ has piecewise smooth boundary.
  Choose $\VHx := \SS^{1,1}(\TT_x)$ with a quasi-uniform mesh $\TT_x$ of $\Omega$ of maximal mesh-width $h$ and take $\pi_\Omega$ to be the Scott-Zhang projection. Let $\VHl := S^{0,0}(\TT_\Gamma)$ be the space of piecewise constants on the trace mesh $\TT_\Gamma$ of $\TT_x$. Moreover, choose $p =-c_{\kappa,\mu,\varepsilon}\ln h$ with a sufficiently large constant $c_{\kappa,\mu,\varepsilon}$ depending only on $\kappa,\mu$ and $\varepsilon$, and $\YY \sim h^{-2/\mu}$. Then,
  \begin{align*}
    \|\UU - \UU_h^\YY\|_{\HH_\YY}
    &\leq C h. 
  \end{align*}
\end{corollary}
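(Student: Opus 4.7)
The strategy is to instantiate Theorem~\ref{prop:bestApproximation} with the concrete choices $\VHx = \SS^{1,1}(\TT_x)$, $\VHl = \SS^{0,0}(\TT_\Gamma)$, and $\pi_\Omega$ the Scott-Zhang projection, and then show that each of the three summands on the right-hand side is of order $h^2$. Taking the square root at the end produces the claimed $\bigO(h)$ estimate.

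The $\YY$- and $p$-dependent terms are handled by straightforward scaling. With $\YY \sim h^{-2/\mu}$ we directly obtain $\YY^{-\mu}\norm{f}_{L^2(\R^d)}^2 \lesssim h^2 \norm{f}_{L^2(\R^d)}^2$. For the exponential term, $\YY^{2\varepsilon} e^{-2\kappa p} = h^{-4\varepsilon/\mu}\, h^{2\kappa c_{\kappa,\mu,\varepsilon}}$, which is $\lesssim h^2$ provided $c_{\kappa,\mu,\varepsilon}$ is chosen so that $2\kappa c_{\kappa,\mu,\varepsilon} \geq 2 + 4\varepsilon/\mu$; this fixes the constant appearing in $p = -c_{\kappa,\mu,\varepsilon} \ln h$.

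For the finite element contribution, the standard first-order Scott-Zhang estimate $\norm{(I-\pi_\Omega) u}_{H^1(\Omega)} \lesssim h \norm{u}_{H^2(\Omega)}$ reduces the corresponding weighted integral to $h^2 \int_0^\YY y^\alpha \norm{\UU_\Omega^\YY(y)}_{H^2(\Omega)}^2\,dy$. For the boundary-element contribution, I would take $\lambda_h(y)$ to be the $L^2(\Gamma)$-best approximation of $\lambda(y)$ in $\SS^{0,0}(\TT_\Gamma)$; classical BEM estimates for piecewise constants (\cite{sauter_schwab}) then give $\norm{\lambda(y) - \lambda_h(y)}_{H^{-1/2}(\Gamma)} \lesssim h^{3/2}\norm{\lambda(y)}_{H^1(\Gamma)}$, reducing this term to $h^3\int_0^\YY y^\alpha \norm{\lambda(y)}_{H^1(\Gamma)}^2\,dy$, which is better than the needed order.

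The main obstacle is to supply, uniformly in $\YY$, the weighted higher-order regularity estimates $\int_0^\YY y^\alpha \norm{\UU_\Omega^\YY(y)}_{H^2(\Omega)}^2\,dy \lesssim \norm{f}_{H^1(\Omega)}^2$ together with an analogous weighted $H^1(\Gamma)$-bound for the Neumann trace $\lambda = \partial_\nu^+\UU_\star^\YY$. Both rely on the regularity and decay analysis for the Caffarelli-Silvestre extension carried out in the companion paper \cite{part_one}; the delicate point is that the constants there must not blow up with $\YY$, because the polynomial inflation $\YY \sim h^{-2/\mu}$ would otherwise erode the $h^2$ convergence of the two approximation terms. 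Once these regularity bounds are in place, the assumptions $\mathfrak{A}\in C^1$, $f\in H^1(\Omega)$, and piecewise smoothness of $\partial\Omega$ ensure finiteness of the two weighted integrals, the three contributions combine to an $\bigO(h^2)$ bound on $\|\UU - \UU_h^\YY\|_{\HH_\YY}^2$, and the claim follows.
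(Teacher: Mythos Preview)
Your overall strategy is correct and matches the paper's: instantiate Theorem~\ref{prop:bestApproximation}, dispatch the $\YY^{-\mu}$ and $\YY^{2\varepsilon}e^{-2\kappa p}$ terms by the scaling choices, and use Scott--Zhang approximation together with the $H^2$-in-$x$ regularity from \cite[Prop.~2.8]{part_one} (valid uniformly in $\YY$) for the FEM contribution.

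The one substantive difference is in the BEM term. You invoke the rate $\|\lambda(y)-\lambda_h(y)\|_{H^{-1/2}(\Gamma)}\lesssim h^{3/2}\|\lambda(y)\|_{H^1(\Gamma)}$, which requires $\lambda(y)\in H^1(\Gamma)$, i.e., essentially $H^{5/2}$ volume regularity of $\UU^\YY_\star$. Under the stated assumptions ($\mathfrak{A}\in C^1$, $f\in H^1(\Omega)$, $\partial\Omega$ only piecewise smooth) the companion paper provides $H^2$ control, and for polyhedral $\Omega$ one cannot in general expect more; so the weighted $H^1(\Gamma)$ bound you postulate for $\lambda$ is not available. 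The paper avoids this by taking $\lambda_h(y)$ as the $L^2(\Gamma)$-projection onto $\SS^{0,0}(\TT_\Gamma)$ and combining the duality estimate $\|\lambda-\pi_{L^2}\lambda\|_{H^{-1/2}(\Gamma)}\lesssim h^{1/2}\|\lambda-\pi_{L^2}\lambda\|_{L^2(\Gamma)}$ with the piecewise approximation $\|\lambda-\pi_{L^2}\lambda\|_{L^2(\Gamma)}\lesssim h^{m}\|\lambda\|_{H^m_{\mathrm{pw}}(\Gamma)}$ and a trace inequality back to $\|\UU^\YY_\star\|_{H^{m+3/2}}$; interpolating between $m=0$ and $m=1$ then gives $\|\lambda(y)-\lambda_h(y)\|_{H^{-1/2}(\Gamma)}^2\lesssim h^2\|\UU^\YY_\star(y)\|_{H^2(B_R(0)\setminus\Gamma)}^2$, which needs only the available $H^2$ regularity. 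Replacing your $H^1(\Gamma)$ argument by this interpolation step closes the gap.
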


\begin{remark}
  \label{rem:more_general_setting}
We note that our main results are valid for more general fractional PDEs as well. Using the same techniques, one obtains the statements also for:
\begin{enumerate}
\item $s\in\C$  with $\Re(s)\geq 0$;
\item operators containing lower order terms, i.e., 
 \begin{align*}
  \LL u := -\fdiv\big( \mathfrak{A} \nabla u\big) + \mathfrak{c} u,
\end{align*}
where $\mathfrak{c}: \R^d \rightarrow \R$ with $\mathfrak{c} \geq 0$ is smooth and satisfies $\mathfrak{c} \equiv \mathfrak{c}_0 \in \R$
  in $\R^d\setminus \overline{\Omega}$.
\end{enumerate}
\end{remark}

\section{Well-posedness and FEM-BEM formulation}
\label{sec:well-posedness}
In this section, we provide the proofs of Theorem~\ref{prop:cont_well_posedness} and Theorem~\ref{pro:diagonalized_couplings}. 

\subsection{Poincar\'e inequalities}
We now show the well-posedness of our variational formulations. The main ingredient is a Poincar\'e type estimate, which uses the following compactness result.

\begin{lemma}
  \label{lemma:compactness_on_bounded_domains}
  Let $D\subseteq \R^{d} \times \R^+$ be a bounded Lipschitz domain.  
  Assume $u_n \rightharpoonup 0$ weakly in $H^1(y^\alpha,D)$ and
  $\norm{\nabla u_n}_{L^2(y^\alpha,D)}\to 0$.
  Then, $u_n \to 0$ in $L^2(y^\alpha,D)$.
  \end{lemma}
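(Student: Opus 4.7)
My plan is to combine a weighted Poincar\'e inequality on $D$ with the weak convergence tested against the constant function $1$ to pass directly from the gradient smallness to $L^2$-convergence; no compact embedding needs to be invoked explicitly.

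\textbf{Step 1 (Poincar\'e).} Set $|D|_\alpha := \int_D y^\alpha\,dx\,dy$, which is finite since $\alpha \in (-1,1)$ and $D$ is bounded, and define the weighted mean $\bar u_n := |D|_\alpha^{-1}\int_D y^\alpha u_n\,dx\,dy$. The weighted Poincar\'e inequality for the Muckenhoupt $A_2$ weight $y^\alpha$ on the bounded Lipschitz domain $D$ yields
\[
\norm{u_n - \bar u_n}_{L^2(y^\alpha,D)} \lesssim \norm{\nabla u_n}_{L^2(y^\alpha,D)},
\]
and the right-hand side tends to $0$ by hypothesis.

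\textbf{Step 2 (mean vanishes).} Because $y^\alpha \in L^1(D)$, the map $u \mapsto \int_D y^\alpha u\,dx\,dy$ is a bounded linear functional on $L^2(y^\alpha,D)$ and hence on $H^1(y^\alpha,D)$. The weak convergence $u_n \rightharpoonup 0$ in $H^1(y^\alpha,D)$ therefore forces $|D|_\alpha\,\bar u_n = \int_D y^\alpha u_n \to 0$, i.e.\ $\bar u_n \to 0$.

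\textbf{Step 3 (combine).} By the triangle inequality,
\[
\norm{u_n}_{L^2(y^\alpha,D)} \;\leq\; \norm{u_n - \bar u_n}_{L^2(y^\alpha,D)} \;+\; |\bar u_n|\,|D|_\alpha^{1/2} \;\longrightarrow\; 0,
\]
which is the desired conclusion.

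\textbf{Main obstacle.} The whole argument hinges on the weighted Poincar\'e inequality in Step 1, which is non-trivial because the weight degenerates (for $\alpha > 0$) or blows up (for $\alpha < 0$) at the face $\{y=0\}$. However, on a bounded Lipschitz domain this is a classical consequence of the Muckenhoupt $A_2$ property of $y^\alpha$ on $(-1,1)$ and is already implicit in the weighted trace theory used earlier in the paper. Should a self-contained derivation be preferred, a standard contradiction-and-compactness scheme works: assume the inequality fails, normalize to obtain a sequence with unit $L^2(y^\alpha)$-norm, zero weighted mean, and vanishing weighted gradient, and extract a weak $H^1(y^\alpha,D)$ limit; the compact embedding $H^1(y^\alpha,D) \hookrightarrow\hookrightarrow L^2(y^\alpha,D)$ (again valid for $A_2$ weights) then delivers a nonzero constant limit, contradicting the zero-mean constraint.
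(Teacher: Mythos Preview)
Your proof is correct and follows the same strategy as the paper: a weighted Poincar\'e inequality controls $u_n$ minus a constant by the gradient, and weak convergence tested against $1$ makes that constant vanish. The only cosmetic differences are that the paper subtracts the \emph{unweighted} mean $c_n=\int_D u_n$ (invoking the Poincar\'e estimate of \cite[Corollary~4.4]{NOS15} after first covering $D$ by starshaped subdomains) and uses an algebraic expansion of $\|u_n\|_{L^2(y^\alpha,D)}^2$ rather than your triangle inequality; your weighted-mean variant is arguably the cleaner bookkeeping.
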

  \begin{proof}
    We can cover the Lipschitz domain $D$ by a finite number of Lipschitz domains
    $D_1,\dots,D_m$ which are starshaped with respect to a ball,
    see for example \cite[Sect. 1.1.9, Lemma 1]{maz11}.
    Thus, without loss of generality we may assume that $D$ is
    starshaped with respect to a ball.
    With $c_n:=\int_{D}{u_n}$ we compute
     \begin{align*}
       \norm{u_n}_{L^2(y^\alpha,D)}^2
       &=\norm{u_n-c_n}_{L^2(y^\alpha,D)}^2 + 2 (u_n, c_n)_{L^2(y^\alpha,D)} - \norm{c_n}_{L^2(y^\alpha,D)}^2 \\
       &\lesssim \norm{ \nabla u_n}^2_{L^2(y^\alpha,D)} + 2\abs{(u_n, c_n)_{L^2(y^\alpha,D)}} \\
       &\leq \norm{ \nabla u_n}^2_{L^2(y^\alpha,D)} + 2\abs{c_n}\abs{(u_n,1)_{L^2(y^\alpha,D)}} \to 0,
     \end{align*}
     where we used the Poincar\'e estimate of~\cite[Corollary~4.4]{NOS15} and the assumed weak convergence.
  \end{proof}
  

 \begin{lemma}
    \label{lemma:my_poincare}
    Fix $\YY \in (0,\infty]$. Let  $\UU \in \WHwr$  with $\int_{\Gamma}{\tracejump{\UU}} ds_x=0$ for almost every $y \in (0,\YY)$.
 \begin{enumerate}
 \item Let $0\leq \mu \leq 2$ and $\YY = \infty$.  There holds 
  \begin{align}
    \label{eq:my_poincare1}
\int_{0}^{\infty}\int_{\R^d}{y^{\alpha}\rho^{\mu-2} |\UU|^2 \,dx} dy
    &\leq C
\int_{0}^{\infty}\int_{\R^d\backslash\Gamma}{y^{\alpha}\rho^{\mu} |\nabla \UU|^2 \,dx} dy 
  \end{align}
  provided the right-hand side is finite. 
 
 \item  Let $\YY\in (0,\infty)$. There exists $\mu_0 > 0$ such that for all $\mu \in [0,\mu_0)$ there  holds
   \begin{align}
    \label{eq:my_poincare}
\int_{0}^{\YY}\int_{\R^d}{y^{\alpha}\rho^{\mu-2}|\UU|^2 \,dx} dy
    &\leq C\left(
 \int_{0}^{\YY}\int_{\R^d\backslash\Gamma}{y^{\alpha}\rho^{\mu}|\nabla\UU|^2 \,dx} dy + |3-d| \|\trace\UU\|_{L^2(\R^d)}^2\right)
  \end{align}
  provided the right-hand side is finite.
 \end{enumerate}
\end{lemma}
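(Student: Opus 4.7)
My approach is a proof by contradiction based on compactness, using Lemma~\ref{lemma:compactness_on_bounded_domains} on bounded subdomains together with a Hardy-type estimate derived from a divergence identity to control the unbounded tails. Assuming the inequality fails, by homogeneity one finds a sequence $(\UU_n) \subset \WHwr$ with $\int_0^\YY\int_{\R^d} y^\alpha \rho^{\mu-2}|\UU_n|^2 \,dx\,dy = 1$ while the right-hand side tends to zero. Since $\rho \geq 1$, the inequalities $\rho^{\mu-2} \geq \rho^{-2}$ and $\rho^\mu \geq 1$ (valid for $\mu \geq 0$) give a uniform bound of $\UU_n$ in $\WHwr$, so a subsequence satisfies $\UU_n \rightharpoonup \UU$ weakly. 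On an exhausting family of bounded Lipschitz subdomains of $(\R^d \setminus \Gamma) \times (0,\YY)$, strong $L^2(y^\alpha, D)$-convergence of $\nabla\UU_n$ to $0$ combined with Lemma~\ref{lemma:compactness_on_bounded_domains} applied to $\UU_n-\UU$ and a diagonal extraction yields strong convergence $\UU_n \to \UU$ in $L^2(y^\alpha)$ on each such subdomain, with $\nabla\UU \equiv 0$ a.e. Hence $\UU \equiv c_-$ on $\Omega \times (0,\YY)$ and $\UU \equiv c_+$ on $(\R^d \setminus \overline{\Omega}) \times (0,\YY)$; the mean-zero jump condition $\int_\Gamma \tracejump{\UU_n}(\cdot,y)\,ds_x = 0$ passes to the weak limit through trace continuity, giving $|\Gamma|(c_- - c_+) = 0$ and thus $c_- = c_+ =: c$.

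The constant $c$ must vanish. For part~(1) with $\YY = \infty$, membership $\UU \in \WHwrInf$ requires $y^\alpha \rho^{-2}$ to be integrable over $\R^d \times \R^+$, which fails (in $(d+1)$-dimensional polar coordinates the integrand is $\sim r^{d+\alpha-2}$ at infinity), forcing $c = 0$. For part~(2) with $d = 3$ the analogous integrability argument on $\R^3 \times (0,\YY)$ again yields $c = 0$. For part~(2) with $d = 2$, the additional right-hand side term $\|\trace \UU_n\|^2_{L^2(\R^2)}$ tends to zero, and since $\trace \UU_n \to c$ this directly forces $c \equiv 0$ on $\R^2$.

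The final step, and the main technical obstacle, is to upgrade this into the statement $\int y^\alpha \rho^{\mu-2}|\UU_n|^2 \to 0$, contradicting the normalization. Fix $M > 0$ with $\overline{\Omega} \subset B_M \subset \R^d$. On the bounded cylinder $B_{2M} \times (0,\YY)$, the bound $\rho^{\mu-2} \leq 1$ combined with the strong $L^2(y^\alpha)$-convergence of $\UU_n \to 0$ gives vanishing contribution. On the exterior $(\R^d \setminus B_{2M}) \times (0,\YY)$, where $\UU_n$ is free of jumps across $\Gamma$, I would rely on the divergence identity
$$
\fdiv\bigl((x,y)\, y^\alpha \rho^{\mu-2}\bigr) = (d+\alpha+\mu-1)\, y^\alpha \rho^{\mu-2} + (2-\mu)\, y^\alpha \rho^{\mu-4},
$$
integrated against $|\UU_n|^2$ and combined with $|(x,y)| \leq \rho$ and Cauchy--Schwarz to absorb the cross term, yielding a Hardy-type control of the exterior tail by $\int y^\alpha \rho^\mu |\nabla\UU_n|^2$ plus boundary contributions on $\{|x| = 2M\}$ and at $y = \YY$ (respectively $y \to \infty$ for part~(1)). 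Boundary integrals on $\{|x| = 2M\}$ vanish through the $H^1$-trace theorem and strong convergence on bounded subdomains; the decay at $y \to \infty$ in part~(1) follows from membership in the weighted space. The boundary contribution at $y = \YY$ in part~(2) is the subtle point and is precisely the source of the restriction $\mu < \mu_0$.
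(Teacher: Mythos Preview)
Your treatment of part~(1) is essentially the paper's argument: a Hardy-type estimate for functions supported away from the origin (the paper derives it via spherical coordinates and integration by parts in $r$, you via the divergence identity for the radial vector field---these are equivalent), followed by a compactness argument to remove the support restriction. That part is fine.

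The genuine gap is in part~(2). Your divergence identity involves the full $(d{+}1)$-dimensional radial field $(x,y)$, and integrating by parts on $(\R^d\setminus B_{2M})\times(0,\YY)$ produces a boundary term at $y=\YY$ of the form
\[
\YY^{1+\alpha}\int_{|x|>2M}\rho(x,\YY)^{\mu-2}\,|\UU_n(x,\YY)|^2\,dx,
\]
over which you have no control: the hypotheses give you nothing about $\UU_n(\cdot,\YY)$ on the unbounded slice. You flag this as ``the subtle point'' and attribute the restriction $\mu<\mu_0$ to it, but the restriction does not help here---there is no mechanism in your argument by which a smallness condition on $\mu$ makes this term vanish.

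The paper avoids this obstruction entirely by \emph{not} integrating by parts in $y$ for part~(2). For $d=3$ it applies a purely $x$-directional weighted Poincar\'e inequality (valid since the spatial dimension exceeds $2$), then multiplies by $y^\alpha$ and integrates over $(0,\YY)$; no boundary at $y=\YY$ ever appears. For $d=2$ it writes $\UU(x,y)=\UU(x,0)+\int_0^y\partial_y\UU(x,\tau)\,d\tau$, bounds the first piece by $\|\trace\UU\|_{L^2(\R^2)}$ using $\int_0^\YY y^\alpha\rho^{\mu-2}\,dy\lesssim 1$ (this is where $\mu<\mu_0$ actually enters), and controls the second by a one-dimensional weighted Hardy inequality in $y$. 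Both routes sidestep the $y=\YY$ boundary. Your compactness framework could be salvaged for part~(2) if you replaced the $(d{+}1)$-dimensional Hardy step by one of these $x$-only or trace-plus-$y$-Hardy arguments for the tail, but as written the proposal does not close.
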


\begin{proof}
The estimates follow from techniques employed in \cite[Theorem 3.3]{AGG94} using a proof by contradiction.
In the first step, we show \eqref{eq:my_poincare1} (which essentially is covered by \cite[Theorem 3.3]{AGG94},  we only account for the additional weight $y^\alpha$) and \eqref{eq:my_poincare} for functions vanishing inside a ball containing the origin. Finally, using a compactness argument this assumption is removed in the second step.


\textbf{Step 1:}
  First, assume that $\UU \equiv 0$ on a sufficiently large (half) ball $B_R(0) \subset \R^{d+1}$ and has compact support. 

  We employ spherical coordinates in $\R^{d}\times\R^+$, chosen such that $y=r \cos(\varphi)$ and
   collect the remaining $d-1$ angles into $\hat{\varphi}$. 
    Using $\rho^{\mu-2} = (1+\abs{x}^2+y^2)^{-(\mu-2)/2} < r^{\mu-2}$ for $\mu \leq 2$, we calculate
   \begin{align*}
     \int_{0}^{\infty}{y^\alpha \int_{\R^d}{
     \rho^{\mu-2} |\UU(x,y)|^2\,dxdy}}
     &\lesssim\int_{\hat{\varphi}} \int_{\varphi=-\pi/2}^{\pi/2}\int_{R}^{\infty}{
       r^{d+\alpha+\mu-2} \cos(\varphi)^\alpha |\UU(x,y)|^2 |J(\varphi,\hat{\varphi})| \,dr\,d\varphi d\hat{\varphi}},
   \end{align*}
where we denoted by $J(\varphi,\hat{\varphi})$ the angular components of the Jacobian in the transformation theorem. Integration by parts in $r$ and using the assumed support properties of $\UU$ gives 
   \begin{multline*}
     \int_{\hat{\varphi}} \int_{-\pi/2}^{\pi/2}\int_{R}^{\infty}{
     r^{d+\alpha+\mu-2} \cos(\varphi)^\alpha |\UU(x,y)|^2 |J(\varphi,\hat{\varphi})| \,dr\,d\varphi d\hat{\varphi}}\\
     \begin{aligned}[t]&\lesssim
       \int_{\hat{\varphi}} \int_{-\pi/2}^{\pi/2}\int_{R}^{\infty}{
         r^{d+\alpha+\mu-1} \cos(\varphi)^\alpha |\UU(x,y)||\nabla \UU(x,y)| |J(\varphi,\hat{\varphi})| \,dr\,d\varphi d\hat{\varphi}}
       \\
       &\lesssim
       \Big(\int_{\hat{\varphi}} \int_{-\pi/2}^{\pi/2}\int_{R}^{\infty}{
         r^{d+\alpha+\mu-2} \cos(\varphi)^\alpha \UU(x,y)^2|J(\varphi,\hat{\varphi})| \,dr\,d\varphi d\hat{\varphi}}\Big)^{1/2} \\
       &\quad\times 
       \Big(\int_{\hat{\varphi}} \int_{-\pi/2}^{\pi/2}\int_{R}^{\infty}{
         r^{d+\alpha+\mu} \cos(\varphi)^\alpha \abs{\nabla \UU(x,y)}^2|J(\varphi,\hat{\varphi})| \,dr\,d\varphi d\hat{\varphi}}\Big)^{1/2}.
     \end{aligned}
   \end{multline*}
Transforming back to $(x,y)$-variables and using $r^\mu \leq \rho^\mu$ for $\mu \geq 0$ this gives the desired bound.
    By density, we can remove the requirement of compact support of $\UU$.

\textbf{Step 2:} 
  In order to get rid of the requirement that $\UU$ vanishes on the ball $B_{R}(0)$, we
  use a compactness argument. To keep the notation succinct we set $\mu=0$ in the following, the general case $\mu>0$ can be done with the exact same arguments.
   Assume that \eqref{eq:my_poincare1} does not hold, i.e., there exists
  a sequence $\UU_n \in \WHwrInf$ such that
  \begin{align*}
\norm{\UU_n}_{\WHwrInf}=1, \qquad
    \int_{0}^{\infty}{y^{\alpha} \|\nabla \UU_n(y)\|^2_{L^2(\R^d\backslash\Gamma)} \,dy}
              \leq \frac{1}{n}.
  \end{align*}
  Since $\UU_n$ is a bounded sequence in the Hilbert space $\WHwrInf$, there exists a
  weakly convergent subsequence (also denoted by $\UU_n$) and we denote the weak limit by $\UU$.

  Since the seminorm is lower semicontinuous, we get $\abs{\UU}_{\WHwrInf}=0$.
  A simple calculation (using polar coordinates, similar to the estimate above) shows that -- as we are in half-space in $\R^{d+1}$ with $d+1>2$ and $\int_{\Gamma}{\tracejump{\UU}ds_x}=0$
    -- the space $\WHwr$ does not contain piecewise constant functions except for $0$,  which means that $\UU=0$.
  
  We now show strong convergence of the sequence to $\UU=0$. To that end, fix a ball $B_R:=B_R(0) \subset \R^{d+1}$ with sufficiently large $R$ such that $\Omega \times \{0\} \subset B_{R}$ and
  consider a smooth cutoff function $\psi: \R^{d+1} \to \R$ such that $\psi \equiv 1$ on
  $B_R$ and $\psi \equiv 0$ on $B_{2R}$.
  We thus decompose $\UU_n$ as
  $$
  \UU_n=\psi \UU_n + (1-\psi)\UU_n=:\UU_n^1 + \UU_n^2.
  $$

  From the compactness result of Lemma~\ref{lemma:compactness_on_bounded_domains}
  applied to $\Omega \times \R^+\cap B_{\widetilde R}$ and $B_{\widetilde R} \setminus \overline{\Omega} \times \R^+$ separately,
  we deduce that
  $\UU_{n} \to \UU$ in $L^2(y^\alpha,B_{\widetilde R})$ and thus $\UU_n^1 \to \psi \UU = 0$ in $L^2(y^\alpha,B_{\widetilde R})$ on all bounded half balls $B_{\widetilde R}$ with sufficiently large $\widetilde R$.
  
  Since $\UU_n^2$ vanishes on $B_R$, we can apply step 1 of the proof
  to determine:
  \begin{align*}
    \norm{\UU_n^2}_{\WHwrInf}
    &\lesssim \abs{\UU_n^2}_{\WHwrInf} \\
      &\lesssim \abs{(1-\psi)\UU_n}_{H^1_\rho(y^\alpha,B_{2R}(0) \backslash \Gamma \times (0,\infty))} + \abs{\UU_n}_{H^1_\rho(y^\alpha,B_{2R}(0)^c\backslash \Gamma \times (0,\infty))} \\
       &\lesssim \norm{\UU_n}_{L^2(y^\alpha,B_{2R}(0))} + \abs{\UU_n}_{H_\rho^1(y^\alpha,\R^d\backslash\Gamma \times (0,\infty))} 
      \to 0.
  \end{align*}
  Overall, we get that $\UU_n \to 0$ in $\WHwrInf$, which is a contradiction to the assumption $\norm{\UU_n}_{\WHwrInf}=1$ for all
  $n \in \N$. 

\textbf{Step 3:} Estimate \eqref{eq:my_poincare} for the case $d=3$ follows directly from multiplying a full-space Poincar\'e-inequality (see for example~\cite[Theorem 3.3]{AGG94} for $\mu = 0$ and a similar calculation to step~1 for $0<\mu\leq 2$ with polar coordinates only in $x$) applied only in $x$ with $y^\alpha$ and integrating over $(0,\YY)$.

\textbf{Step 4:} It remains to show \eqref{eq:my_poincare} for $d=2$, which was also shown in \cite[Lem.~3.2]{part_one}. 
We write $\UU(x,y) = \UU(x,0) + \int_0^y \partial_y \UU(x,\tau) \; d\tau$, which gives
\begin{align*}
\int_0^\YY\int_{\R^d}y^\alpha \rho^{\mu-2} |\UU|^2 \; dxdy \lesssim \int_0^\YY\int_{\R^d}y^\alpha \rho^{\mu-2} |\UU(x,0)|^2 + y^\alpha \rho^{\mu-2}\Big( \int_0^y \partial_y \UU(x,\tau) \; d\tau \Big)^2 \; dxdy.
\end{align*} 
Since $\int_0^\YY y^\alpha \rho^{\mu-2} dy \lesssim 1$ for sufficiently small $\mu < \mu_0$ with $\mu_0$ depending only on $\alpha$, the first term on the left-hand side can be bounded by $C \norm{\trace \UU}_{L^2(\R^d)}^2$. For the second term, we employ a weighted Hardy-inequality, see e.g. \cite{muckenhoupt72}, to obtain
\begin{align*}
 \int_0^\YY\int_{\R^d}y^\alpha \rho^{\mu-2}\Big( \int_0^y \partial_y \UU(x,\tau) \; d\tau \Big)^2 \; dxdy \lesssim \int_{\R^d} \int_0^\YY y^\alpha \rho^\mu |\partial_y \UU|^2 \; dy dx,
\end{align*}
which shows the claimed inequality.
\end{proof}

We can now look at the well-posedness of our discrete problem.

\begin{proof}[Proof of Theorem~\ref{prop:cont_well_posedness}]
Let $\YY\in (0,\infty]$ and $(\UU_\Omega^\YY,\UU_\star^\YY) \in \HH_\YY$.
  On the interior domain $\Omega$, we integrate a standard Poincar\'e-like estimate
  to obtain
\begin{align}\label{eq:poincInt}
\int_{0}^{\YY}{y^\alpha \int_{\Omega}{\rho^{-2}|\UU_\star^\YY|^2dx}dy} \leq  \int_{0}^{\YY}{y^\alpha \int_{\Omega}{|\UU_\star^\YY|^2dx}dy}
  \lesssim \int_{0}^{\YY}{y^\alpha \int_{\Omega}{|\nabla_x \UU_\star^\YY|^2dx}dy}.
\end{align}
  By the conditions on $\gamma^- \UU_{\star}^\YY$ and $\tracejump{\UU_\star^\YY}$,
  we observe that the function
  $\UU^\YY:=\begin{cases}\UU_\Omega^\YY, \quad \text{ in }\Omega \\ \UU_\star^\YY, \quad \text{ in }\R^d\backslash\overline{\Omega}\end{cases}$ has a jump across $\partial \Omega$ with
  vanishing integral mean.
  Applying Lemma~\ref{lemma:my_poincare} to $\UU^\YY$, we get with \eqref{eq:poincInt} that
\begin{align*}
  B^\YY(\UU_h^\YY,\UU_h^\YY)  &\gtrsim
  \int_0^\YY \int_{\Omega}y^\alpha \abs{\nabla\UU_\Omega^\YY}^2 dx dy +   \int_0^\YY \int_{\R^d\backslash\Gamma}y^\alpha \abs{\nabla\UU_\star^\YY}^2 dx dy \\ 
& \quad +  s \|{\trace{\UU_\Omega^\YY}}\|_{L^2(\Omega)}^2
      + s \|{\trace{\UU_\star^\YY}}\|_{L^2(\R^d)}^2 \\
&=
  \int_0^\YY \int_{\R^d \backslash\Gamma}y^\alpha \abs{\nabla\UU^\YY}^2 dx dy +   \int_0^\YY \int_{\Omega}y^\alpha \abs{\nabla\UU_\star^\YY}^2 dx dy \\ 
& \quad +  s \|{\trace{\UU_\Omega^\YY}}\|_{L^2(\Omega)}^2
      + s \|{\trace{\UU_\star^\YY}}\|_{L^2(\R^d)}^2 \\
      & \gtrsim
   \norm{\UU^\YY}_{H^1_\rho(y^\alpha,\R^d\backslash \Gamma \times (0,\YY))}^2 +  \norm{\UU_\star^\YY}_{H^1_\rho(y^\alpha,\Omega \times (0,\YY))}^2 +
 s \|{\trace{\UU_\Omega^\YY}}\|_{L^2(\Omega)}^2
      + s \|{\trace{\UU_\star^\YY}}\|_{L^2(\R^d)}^2  \\
  &= \|(\UU_\Omega^\YY, \UU_\star^\YY)\|_{\HH_\YY}^2,
\end{align*}
which shows coercivity.

 In order to bound the right-hand side in \eqref{eq:truncated_BLF_eq}, we can directly use the definition of the $\HH_{\YY}$-norm together with $\operatorname*{supp} f \subset \Omega$ for $s>0$ to obtain
  \begin{align*}
    \int_{\R^d} f \trace{\VV^\YY_\Omega} \; dx
    &\leq s^{-1}\norm{f}_{L^2(\Omega)} s\norm{\trace{\VV_\Omega^\YY}}_{L^2(\R^d)}
      \leq s^{-1}\norm{f}_{L^2(\Omega)} \norm{\VV_\Omega^\YY}_{\HH_{\YY}}.
  \end{align*}
  For $\YY = \infty$ and $s=0$, which implies $d=3$ by assumption, the trace estimate \eqref{eq:trace} gives 
  \begin{align*}
    \int_{\R^d} f \trace{\VV_\Omega} \; dx
    &\leq \|{\rho(x,0)^{\beta}f}\|_{L^2(\Omega)} \|{\rho(x,0)^{-\beta}\trace{\VV_\Omega}}\|_{L^2(\R^d)}
\lesssim \norm{f}_{L^2(\Omega)} \norm{\nabla \VV_\Omega}_{L^2(y^\alpha,\R^d \times \R^+)} \\
     & \leq \norm{f}_{L^2(\Omega)} \norm{\VV}_{\HH_\infty}.
  \end{align*}
For the case $\YY < \infty$ and $s=0$, we use a cut-off function $\chi$ satisfying $\chi \equiv 1$ on $(0,\YY/2)$, $\operatorname*{supp}\chi \subset (0,\YY)$ and $\norm{\nabla \chi}_{L^\infty(\R^+)}\lesssim \YY^{-1}$. As $\Omega$ is bounded, this gives with the trace estimate \cite[Lem.~3.7]{KarMel19}
  \begin{align*}
    \int_{\R^d} f \trace{\VV^\YY_\Omega} \; dx
    &\leq \norm{f}_{L^2(\Omega)} \norm{\trace{(\chi\VV^\YY_\Omega)}}_{L^2(\Omega)} \\
&\lesssim \norm{f}_{L^2(\Omega)} \left(\norm{\chi \VV^\YY_\Omega}_{L^2(y^\alpha,\Omega\times (0,\YY))} +\norm{\nabla(\chi \VV^\YY_\Omega)}_{L^2(y^\alpha,\Omega \times (0,\YY))} \right) \\
&\lesssim  \norm{f}_{L^2(\Omega)} \left(\Big(1+\frac{1}{\YY} \Big)\norm{\VV^\YY_\Omega}_{L^2(y^\alpha,\Omega\times (0,\YY))} +\norm{\nabla \VV^\YY_\Omega}_{L^2(y^\alpha,\Omega \times (0,\YY))} \right) \\
     & \leq C \left(1+\frac{1}{\YY}\right)\norm{f}_{L^2(\Omega)} \norm{\VV^\YY_\Omega}_{\HH_{\YY}},
  \end{align*}
  which finishes the proof.
\end{proof}

\subsection{Diagonalization}
We now apply the diagonalization procedure of \cite{tensor_fem} to show that solutions of \eqref{eq:truncated_BLF_eq_discrete} can be written as in \eqref{eq:y_decompotistion}.
We recall that $(\varphi_j)_{j=0}^{N_y}$ is the orthonormal basis of eigenfunctions
  from~\eqref{eq:eigenvalue_problem} with corresponding eigenvalues $\mu_j$.

\begin{lemma}\label{lemma:y_decomposition} 
  Functions $(\UU_\Omega^\YY,\UU_\star^\YY) \in \HH_{h,\YY}$ solve \eqref{eq:truncated_BLF_eq_discrete}, if and only if
  they can be written as
  \begin{align*}
  \UU_{\bullet}^\YY(x,y)=\sum_{j=0}^{N_y}{ u_{j,\bullet}(x) \varphi_{j}(y)},
  \end{align*}
  where $\bullet \in \{\Omega,\star\}$ and
  \begin{align*}
    u_{j,\Omega} \in \VHx, \qquad 
    u_{j,\star} \in H^1_{\rho_x}(\R^d \setminus \Gamma) \quad \forall j\geq 0
  \end{align*}
  such that for all $v \in \VHx$
\begin{subequations}
  \label{eq:diagonalized_eqns}
  \begin{align}
    \big(\mathfrak{A}\nabla u_{j,\Omega}, \nabla v\big)_{L^2(\Omega)}
    +  \big(\mu_j u_{j,\Omega}, v\big)_{L^2(\Omega)}
    &+ \big<\normaljump{u_{j,\star}},\gamma^-_{\Gamma} v\big>_{L^2(\Gamma)}
    = d_{\beta}\varphi_j(0)(f, v)_{L^2(\Omega)}
    \label{eq:diagonalized_eqns_inside}
    \\
    -\laplace u_{j,\star} + \mu_j u_{j,\star} &=0  \qquad \text{in $\R^d \setminus \Gamma$},
    \label{eq:diagonalized_eqns_outside}\\
    \tracejump{u_{j,\star}}&= \gamma^-_{\Gamma} u_j,
                             \quad \gamma^-_{\Gamma} u_{j,\star} \in (\VHl)^\circ.
                             \label{eq:diagonalized_eqns_traces}
  \end{align}
  \end{subequations}
\end{lemma}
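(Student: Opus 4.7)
The plan is to exploit the tensor-product ansatz built into $\HH_{h,\YY}$ together with the orthonormality of $(\varphi_j)$ in $L^2(y^\alpha,(0,\YY))$ and the eigenvalue identity \eqref{eq:eigenvalue_problem}. The trace and jump conditions \eqref{eq:diagonalized_eqns_traces} are already encoded in the definition of $\HH_{h,\YY}$, so the proof reduces to transferring the variational identity \eqref{eq:truncated_BLF_eq_discrete} back and forth into the modal identities \eqref{eq:diagonalized_eqns_inside}--\eqref{eq:diagonalized_eqns_outside}.

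For the forward direction, I would insert the ansatz $\UU_\bullet = \sum_j u_{j,\bullet}\varphi_j$ into $B^\YY$, take a pure modal test pair $\VV = (v_i\varphi_i,w_i\varphi_i)$ with $v_i \in \VHx$ and $w_i \in H^1_{\rho_x}(\R^d\setminus\Gamma)$ satisfying $\tracejump{w_i}=\gamma^-_\Gamma v_i$, $\gamma^-_\Gamma w_i \in (\VHl)^\circ$, and evaluate the $y$-integrals. The $x$-gradient and $s d_\beta$-trace terms collapse via $L^2(y^\alpha)$-orthogonality, while the $y$-derivative contribution together with the boundary term at $y=0$ collapses via the eigenvalue relation \eqref{eq:eigenvalue_problem}, producing exactly the factor $\mu_i$ on the $L^2(\Omega)$ and $L^2(\R^d\setminus\Gamma)$ mass terms. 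The right-hand side contracts to $d_\beta\varphi_i(0)(f,v_i)_{L^2(\Omega)}$ since $\supp f\subset\Omega$. This yields the coupled modal identity on $\Omega\cup(\R^d\setminus\Gamma)$ for every $i=0,\dots,N_y$.

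To separate this identity into \eqref{eq:diagonalized_eqns_inside} and \eqref{eq:diagonalized_eqns_outside}, I would first plug in $v_i = 0$ and arbitrary $w_i \in C_c^\infty(\R^d\setminus\Gamma)$ (for which the trace/jump constraints trivialize). The resulting identity is the distributional statement $-\laplace u_{i,\star}+\mu_i u_{i,\star}=0$ in $\R^d\setminus\Gamma$, i.e.~\eqref{eq:diagonalized_eqns_outside}. Using this PDE, Green's identity on both $\Omega$ and $\R^d\setminus\overline\Omega$ converts the exterior bulk contribution $\int_{\R^d\setminus\Gamma}(\nabla u_{i,\star}\cdot\nabla w_i+\mu_i u_{i,\star}w_i)$ into the boundary pairing $\langle\partial_\nu^- u_{i,\star},\gamma^-_\Gamma w_i\rangle-\langle\partial_\nu^+ u_{i,\star},\gamma^+_\Gamma w_i\rangle$, which—after applying $\gamma^+_\Gamma w_i = \gamma^-_\Gamma w_i-\tracejump{w_i}$ and the jump constraint $\tracejump{w_i}=\gamma^-_\Gamma v_i$—reduces to $\langle\normaljump{u_{i,\star}},\gamma^-_\Gamma v_i\rangle_\Gamma$ modulo sign bookkeeping. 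Combined with the collapsed interior part this is exactly \eqref{eq:diagonalized_eqns_inside}.

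The reverse direction is then a direct assembly: given $u_{j,\bullet}$ solving \eqref{eq:diagonalized_eqns}, define $\UU_\bullet := \sum_j u_{j,\bullet}\varphi_j$; membership in $\HH_{h,\YY}$ is immediate from \eqref{eq:diagonalized_eqns_traces}. For an arbitrary test function $\VV_h^\YY \in \HH_{h,\YY}$ expanded in the modes $\varphi_i$, the same orthonormality and eigenvalue identities reduce $B^\YY(\UU_h^\YY,\VV_h^\YY)$ to a sum over modes, each summand being exactly the modal equation \eqref{eq:diagonalized_eqns_inside} (with the boundary term rewritten by undoing Green's identity using \eqref{eq:diagonalized_eqns_outside}), and the total equals $d_\beta(f,\trace \VV_\Omega^\YY)_{L^2(\R^d)}$. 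The main technical obstacle, and the only point where genuine care is required rather than routine bookkeeping, is Step~4: tracking the Green's-identity signs across $\Gamma$ to extract precisely the jump pairing $\langle \normaljump{u_{i,\star}},\gamma^-_\Gamma v_i\rangle$ from a test function that has a non-trivial jump inherited from $v_i$; everything else is orthogonality and the eigenvalue identity applied modewise.
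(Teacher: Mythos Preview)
Your proposal is correct, and the core computation---orthonormality of $(\varphi_j)$ in $L^2(y^\alpha)$ together with the eigenvalue identity \eqref{eq:eigenvalue_problem} to collapse the $y$-integrals---is exactly what the paper does. The logical structure differs, however. You argue both implications directly: forward (solution of \eqref{eq:truncated_BLF_eq_discrete} $\Rightarrow$ modes solve \eqref{eq:diagonalized_eqns}) by modal testing and Green's identity, and reverse by reassembly. The paper instead first establishes unique solvability of the modal problems \eqref{eq:diagonalized_eqns} via coercivity of a combined weak form, then shows only the reverse direction (modal solutions $\Rightarrow$ solution of \eqref{eq:truncated_BLF_eq_discrete}), and closes the equivalence by invoking coercivity of $B^\YY$ to conclude uniqueness in \eqref{eq:truncated_BLF_eq_discrete}. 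Where you carry out the Green's-identity extraction of the boundary pairing explicitly, the paper defers this equivalence between weak and strong form to \cite[Sect.~7]{LS09}.

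Your route is slightly more self-contained and avoids having to establish well-posedness of \eqref{eq:diagonalized_eqns} as a prerequisite; the paper's route is shorter once coercivity is in hand and has the side benefit of simultaneously proving solvability of the modal problems, which is reused in the proof of Theorem~\ref{pro:diagonalized_couplings}. One small point you should make explicit in the forward direction: the tensor expansion $\UU_\bullet = \sum_j u_{j,\bullet}\varphi_j$ is guaranteed for \emph{every} element of $\HH_{h,\YY}$ (not just those in the dense pre-closure set) because $\VHy$ is finite-dimensional; this is implicit in your ``insert the ansatz'' step but worth one sentence.
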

\begin{proof}
  At first, we show unique solvability of \eqref{eq:diagonalized_eqns}. For that, we consider the weak formulation of \eqref{eq:diagonalized_eqns} given by
    \begin{multline}
      \label{eq:weak_form_of_diagonalized_problems}
  (\mathfrak{A}\nabla u_{j,\Omega}, \nabla v_{j,\Omega})_{L^2(\Omega)}
  + (\mu_j u_{j,\Omega}, v_{j,\Omega})_{L^2(\Omega)}+(\nabla u_{j,\star}, \nabla v_{j,\star})_{L^2(\R^d\backslash\Gamma)}+
  \mu_j (u_{j,\star},v_{j,\star})_{L^2(\R^d)}\\
  =d_{\beta} \varphi_j(0)(f,v_{j,\Omega})_{L^2(\R^d)}.
\end{multline}
The equivalence between the weak form and the strong form follows from standard arguments, and we refer to \cite[Sect.7]{LS09}.
Coercivity of the weak formulation in $H^1(\Omega) \times H^1(\R^d\backslash\Gamma)$ is clear for $\mu_j>0$ as $\mathfrak{A}$ is positive definite.
For $\mu_j=0$, one can employ Poincar\'e estimates on $\Omega$ and $\R^d$ (with weights) to obtain coercivity in $H^1(\Omega) \times H^1_{\rho_x}(\R^d\backslash\Gamma)$. Therefore, for each $j$, a unique solution $(u_{j,\Omega},u_{j,\star}) \in \VHx \times H^1_{\rho_x}(\R^d \backslash\Gamma) \subset H^1(\Omega) \times H^1_{\rho_x}(\R^d\backslash\Gamma)$ exists.

  We now show that if the $u_{j,\bullet}$ solve \eqref{eq:diagonalized_eqns}
    then  $\UU_h^\YY:=(\UU_\Omega^\YY,\UU_\star^\YY)$ with $\UU_{\bullet}^\YY:=\sum_{j=0}^{N_y}{ u_{j,\bullet} \varphi_{j}}$ solves~\eqref{eq:truncated_BLF_eq_discrete}. By
    construction we have $\UU_h^{\YY} \in \HH_{h,\YY}$.
    We next look at the weak formulation of~\eqref{eq:truncated_BLF_eq_discrete}. First we focus on the $\star$-contribution. Taking $\VV_\star^\YY =  v_{j,\star}(x) \varphi_j(y)$ with arbitrary $v_{j,\star} \in \VHx$ as test function, we compute
  \begin{align*}
  A^\YY_{\R^d\setminus \Gamma}(\UU_\star^\YY,\VV_\star^\YY)
  &= \sum_{\ell=0}^{N_y}
        \int_{\R^d}{u_{\ell,\star}(x) v_{j,\star}(x) dx}\int_{0}^{\YY}{y^\alpha  \varphi'_\ell(y) \varphi'_j(y)\,dy} \\
  &\qquad+ 
        \int_{\R^d\backslash\Gamma}{\nabla u_{\ell,\star}(x) \nabla v_{j,\star}(x) dx}\int_{0}^{\YY}{y^\alpha  \varphi_\ell(y) \varphi_j(y)\,dy}\\
  &\qquad+ 
        \int_{\R^d}{u_{\ell,\star}(x) v_{j,\star}(x) \; dx} \cdot s\varphi_\ell(0) \varphi_j(0)\\
  &=  
    \mu_j \int_{\R^d}{u_{j,\star}(x) v_{j,\star}(x) dx} +
    \int_{\R^d\backslash\Gamma}{\nabla u_{j,\star}(x) \nabla v_{j,\star}(x) dx}.
\end{align*}
For the interior contribution, the same diagonalization  procedure gives
for $\VV^{\YY}_{\Omega}:=v_{j,\Omega}(x)\varphi_j(y)$
\begin{align*}
  A^\YY_{\Omega}(\UU_{\Omega}^\YY, \VV_{\Omega}^\YY)
  &=
  (\mathfrak{A}\nabla u_{j,\Omega}, \nabla v_{j,\Omega})_{L^2(\Omega)}
  +  (\mu_j u_{j,\Omega}, v_{j,\Omega})_{L^2(\Omega)}.
\end{align*}
Summing up, and using the weak form \eqref{eq:weak_form_of_diagonalized_problems} we get that
$$
B^{\YY}(\UU_h^\YY,\VV_{h}^{\YY})= d_\beta (f,\trace{\VV_{\Omega}^{\YY}})_{L^2(\R^d)}
\qquad \text{for all } \VV_{h}^{\YY} = (\VV_{\Omega}^{\YY},\VV_{\star}^{\YY}) = \Big(\sum_{j=0}^{N_y}{v_{j,\Omega} \varphi_j },\sum_{j=0}^{N_y}{v_{j,\star} \varphi_j }\Big).
$$
By density we can extend this equality to all test functions $\VV_{h}^{\YY}$ in the space $\HH_{h,\YY}$ and get~\eqref{eq:truncated_BLF_eq_discrete}. Since the bilinear form
$B^{\YY}(\cdot,\cdot)$ is coercive we get that the function
$\UU^{\YY}$ thus constructed is the only solution to~\eqref{eq:truncated_BLF_eq_discrete},
which establishes the stated equivalence.
\end{proof}

\begin{proof}[Proof of Theorem~\ref{pro:diagonalized_couplings}]
The statement follows from Lemma~\ref{lemma:y_decomposition} and \cite[Section 7]{LS09}, as defining 
  $u_{j,\star}(x):=\widetilde{V}(\mu_j) \lambda_j (x)
  -\widetilde{K}(\mu_j)\gamma^-_{\Gamma} u_{j,\Omega}(x)$ and plugging that into \eqref{eq:diagonalized_eqns} gives the stated equations using classical properties of the layer potentials.  

  By definition and decay of the layer potentials, we have that $u_{j,\star} \in H^1(\R^d\backslash\Gamma)$ for all $j \in \N_0$ such that $\mu_j\neq 0$, which gives  $u_{j,\star} \in H^1_{\rho_x}(\R^d\backslash\Gamma)$ as well.
    If $s >0$, no zero eigenvalue is possible. This matches with the
      the requirement $s \,\trace \UU_\star \in L^2(\R^d)$ in the definition of the space $\HH_\YY$.
    The case $s=0$ is only allowed for $d=3$. Here, $\trace \UU_\star$
    is not required to be in $L^2(\R^d)$, which would not hold. However, in this case the decay property of the layer potentials for the Poisson equation, see e.g. \cite{sauter_schwab}, give  $u_{j,\star} \in H^1_{\rho_x}(\R^d\backslash\Gamma)$. In short, we have that our constructed solution is in the semi-discrete space $\HH_{h,\YY}$.
 
   Notably,
  \eqref{eq:diagonalized_eqns} is just the ``non-standard transmission problem''
  corresponding to the standard symmetric FEM-BEM coupling given by Theorem~\ref{pro:diagonalized_couplings}.
\end{proof}

\section{Error analysis}
The key to the error analysis are the decay and regularity properties shown in \cite{part_one}. In order to make the present paper more accessible, we summarize the key results of \cite{part_one} in the following.

\subsection{Decay and regularity}
The solution to the truncated problem is in fact a weak solution to a Neumann problem. 
Thus, in this section, we consider solutions $\UU^\YY$ to the following truncated problem:
\begin{subequations}    \label{eq:truncated_problem_pw}
  \begin{align}
    -\fdiv\big(y^\alpha \mathfrak{A}_x \nabla {\UU}^\YY\big) &= 0  &&\text{in $\R^{d} \times (0,\YY)$},\\
    d_{\beta}^{-1}\partial_{\nu^\alpha} {\UU}^\YY + s \trace{\UU}^\YY&=f &&\text{on $\R^{d} \times \{0\}$} , \\
    \partial_{y} {\UU}^\YY&=0 &&\text{on $\R^{d} \times \{\YY\}$}.
  \end{align}
\end{subequations}
 
Then, the truncation error can be controlled via the following proposition.
\begin{proposition}[Decay in $y$, {\cite[Prop.~2.5]{part_one}}]
  \label{prop:truncation_error}
  Fix $\YY >0$. Let $\UU$ solve~\eqref{eq:extension} and $\UU^\YY$ solve~\eqref{eq:truncated_problem_pw}.
   Let $\mu$ be given by   $ \mu:=
    \begin{cases}
      1+\abs{\alpha} & s>0 \\
       1+\alpha & s = 0
    \end{cases}.
$
  Then, the following estimate holds:
  \begin{align*}
    \|\UU^{\YY} - \UU\|^2_{\HHwry}
    +s\|\trace(\UU^{\YY} - \UU)\|^2_{L^2(\R^d)}
    \lesssim \YY^{-\mu}   
    \norm{f}^2_{L^2(\Omega)}.
  \end{align*}
\end{proposition}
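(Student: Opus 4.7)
My strategy is to localize the difference $E:=\UU-\UU^\YY$ to a single boundary term at $y=\YY$ via an energy identity, and then to control that term using the decay of $\UU$ as $y\to\infty$ through the spectral representation of the Caffarelli--Silvestre extension.

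First I would subtract the two PDEs. On $\R^d\times(0,\YY)$, both $\UU$ and $\UU^\YY$ satisfy $-\fdiv(y^\alpha\mathfrak{A}_x\nabla\cdot)=0$ and the same Robin condition at $y=0$, but only $\UU^\YY$ enforces the artificial Neumann condition $\partial_y\UU^\YY(\cdot,\YY)=0$. Integrating the weak form by parts in $y$ yields for every admissible test $\VV$
\begin{align*}
\int_0^\YY\!\int_{\R^d} y^\alpha\mathfrak{A}_x\nabla E\cdot\nabla\VV\,dx\,dy + s\,d_\beta\!\int_{\R^d}\trace E\,\trace\VV\,dx = \YY^\alpha\!\int_{\R^d}\partial_y\UU(x,\YY)\,\VV(x,\YY)\,dx.
\end{align*}
Taking $\VV=E$ and invoking the Poincar\'e-type coercivity of Lemma~\ref{lemma:my_poincare} with trivial jump across $\Gamma$ (since $\UU$ has no jump) bounds $\|E\|^2_{\HHwry}+s\|\trace E\|^2_{L^2(\R^d)}$ from above by the boundary term on the right.

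For the boundary term I would use the fundamental theorem of calculus: for any smooth extension $\widetilde E$ of $E$ to $(\YY,\infty)$ decaying at infinity,
\begin{align*}
\YY^\alpha\!\int_{\R^d}\partial_y\UU(x,\YY)\,E(x,\YY)\,dx = -\int_\YY^\infty\!\partial_y\Big(y^\alpha\!\int_{\R^d}\partial_y\UU\,\widetilde E\,dx\Big)\,dy.
\end{align*}
Expanding the derivative and using the extension PDE $-\fdiv(y^\alpha\mathfrak{A}_x\nabla\UU)=0$ to trade $\partial_y(y^\alpha\partial_y\UU)$ for $x$-derivatives, the RHS is controlled by the tail energy $T(\YY):=\int_\YY^\infty\!\int_{\R^d} y^\alpha(|\nabla\UU|^2+\rho^{-2}|\UU|^2)\,dx\,dy$, plus a lower-order weighted $L^2$-trace of $\UU$ at $y=\YY$ that is handled analogously (via a Hardy-type estimate as in Step~4 of the proof of Lemma~\ref{lemma:my_poincare} when $d=2$). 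Young's inequality then absorbs the $E$-factor into the left-hand side, reducing everything to the bound $T(\YY)\lesssim\YY^{-\mu}\|f\|_{L^2(\Omega)}^2$.

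For this tail estimate I would exploit the spectral representation of the Caffarelli--Silvestre extension: writing $u=(\LL^\beta+s)^{-1}f$ and $\psi_\beta(\tau)\propto\tau^\beta K_\beta(\tau)$, one has $\UU(\cdot,y)=\int_{\sigma(\LL)}\psi_\beta(y\sqrt z)\,dE_z u$. Orthogonality of the spectral measure reduces $T(\YY)$ to a single $z$-integral against $d\|E_z u\|^2=(z^\beta+s)^{-2}\,d\|E_z f\|^2$ with an explicit Bessel kernel supported on $\tau\in(\YY\sqrt z,\infty)$. Using $K_\beta$'s exponential decay for large argument and its small-argument behavior $K_\beta(\tau)\sim\tau^{-|\beta|}$, the contribution of large $z$ is exponentially small in $\YY\sqrt z$, while small $z$ yields a polynomial tail. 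The two regimes for $\mu$ then emerge from $(z^\beta+s)^{-2}$: for $s>0$ this extra vanishing factor removes the near-zero singularity of the kernel and delivers the sharper rate $\YY^{-(1+|\alpha|)}$, while for $s=0$ only $\YY^{-(1+\alpha)}$ survives. The main obstacle is precisely this Bessel/spectral bookkeeping: tracking the correct powers of $z$ through the profile's asymptotics to recover the sharp exponent, and in particular the switch between $\alpha$ and $|\alpha|$ as $s$ crosses zero, which is where the two cases $\beta<1/2$ and $\beta>1/2$ behave qualitatively differently.
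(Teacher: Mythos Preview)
The paper does not actually prove Proposition~\ref{prop:truncation_error}: it is quoted verbatim from the companion paper \cite{part_one} (this is stated at the start of Section~4), so there is no in-paper argument to compare against. I can therefore only assess your plan on its own merits.

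Your strategy is the standard one for Caffarelli--Silvestre truncation errors and is essentially sound. The energy identity you write for $E=\UU-\UU^\YY$ is correct: subtracting the two weak forms on $\R^d\times(0,\YY)$ leaves exactly the single boundary contribution $\YY^\alpha\int_{\R^d}\partial_y\UU(\cdot,\YY)\,\VV(\cdot,\YY)\,dx$, since the Robin data at $y=0$ cancel and only $\UU^\YY$ carries the artificial Neumann condition. Invoking Lemma~\ref{lemma:my_poincare} to upgrade coercivity to the full $H^1_\rho$-norm is also legitimate here because $E$ has no jump across $\Gamma$ and, in the case $d=2$, the standing assumption $s>0$ supplies the needed $\|\trace E\|_{L^2}$-control. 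Your spectral route to the tail bound $T(\YY)\lesssim\YY^{-\mu}\|f\|_{L^2}^2$ via the profile $\psi_\beta(\tau)=c\,\tau^\beta K_\beta(\tau)$ is exactly how such estimates are obtained, and your explanation for the $\alpha$ versus $|\alpha|$ dichotomy (the extra factor $(z^\beta+s)^{-2}$ damping small spectral values when $s>0$) is the right mechanism.

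The one step that deserves more care is your treatment of the boundary term via an extension $\widetilde E$ to $y>\YY$. After your integration by parts and use of the PDE you arrive at $\int_\YY^\infty\!\int_{\R^d} y^\alpha\,\mathfrak A_x\nabla\UU\cdot\nabla\widetilde E$, and the absorption argument requires $\int_\YY^\infty y^\alpha|\nabla\widetilde E|^2\lesssim \|E\|_{\HHwry}^2$. This is not automatic for an ``arbitrary'' decaying extension: a constant-in-$y$ extension has infinite weighted energy (since $\int_\YY^\infty y^\alpha\,dy=\infty$), and a naive cutoff introduces $\YY^{\alpha+1}\|\nabla_x E(\cdot,\YY)\|_{L^2}^2$, which is not obviously dominated by the truncated energy of $E$. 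One clean fix is to avoid the extension altogether and instead bound the boundary term by Cauchy--Schwarz in $x$ and then control both factors separately via one-dimensional trace/Hardy inequalities in $y$: $\YY^\alpha\|\partial_y\UU(\cdot,\YY)\|_{L^2(\R^d)}^2\lesssim T(\YY)$ from the tail side, and a multiplicative trace estimate at $y=\YY$ (of the type already used in the proof of Theorem~\ref{prop:cont_well_posedness} with a cutoff supported in $(\YY/2,\YY)$) for $\|E(\cdot,\YY)\|_{L^2(\R^d)}$ in terms of the energy of $E$ on $(0,\YY)$. With that adjustment your outline goes through.
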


The goal in the following is to employ $hp$-FEM in the extended variable $y$. Therefore, weighted analytic regularity estimates are the key to the a-priori analysis.  

\begin{proposition}[Regularity in $y$, {\cite[Prop.~2.6]{part_one}}]
  \label{prop:regularity}
  Fix $\YY \in (0,\infty]$ and let
  $\ell \in \N$. Let $\UU$ solve \eqref{eq:truncated_problem_pw}. Then,  
  there exist constants $C,K>0$
  and $\varepsilon \in (0,1)$ such that the following estimate holds:
  \begin{align*}
    \big\|{y^{\ell-\varepsilon}\nabla\partial^{\ell}_y \UU}\big\|_{L^2(y^\alpha,\R^d\times(0,\YY))}
    \leq CK^{\ell} \ell! \norm{f}_{L^2(\Omega)}.
  \end{align*}
  All constants are independent of $\ell, \YY,$ and $\UU$.
\end{proposition}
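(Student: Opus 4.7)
The plan is to prove the estimate by induction on $\ell$, using weighted energy estimates for the $y$-derivatives of $\UU$ that exploit the structure of the degenerate PDE. For the base case $\ell = 0$, I would use the standard energy estimate for \eqref{eq:truncated_problem_pw} together with Theorem~\ref{prop:cont_well_posedness}, giving $\|\nabla \UU\|_{L^2(y^\alpha, \R^d \times (0,\YY))} \lesssim \|f\|_{L^2(\Omega)}$; inserting the weight $y^{-\varepsilon}$ with $\varepsilon \in (0,(1+\alpha)/2)$ changes the estimate by only a multiplicative constant, since near $y=0$ the extra factor is integrable against $y^\alpha$, and the weighted Hardy inequality of Lemma~\ref{lemma:my_poincare} handles the bulk with a constant independent of $\YY$.

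For the inductive step, I rewrite the equation as
\begin{equation*}
  \partial_y\big(y^\alpha \partial_y \UU\big) \;=\; -\,y^\alpha\,\fdiv_x\!\big(\mathfrak{A}\nabla_x \UU\big),
\end{equation*}
differentiate $\ell$ times in $y$, and apply the Leibniz rule together with $\partial_y^j(y^\alpha) = \alpha(\alpha-1)\cdots(\alpha-j+1)\,y^{\alpha-j}$. This yields a recursion of the schematic form
\begin{equation*}
  y^\alpha \partial_y^{\ell+2}\UU + \sum_{j=1}^{\ell+1} c_{\ell,j}\, y^{\alpha-j}\,\partial_y^{\ell+2-j}\UU \;=\; -\sum_{k=0}^{\ell}\binom{\ell}{k}\,\partial_y^k(y^\alpha)\,\fdiv_x\!\big(\mathfrak{A}\nabla_x \partial_y^{\ell-k}\UU\big),
\end{equation*}
with combinatorial coefficients $c_{\ell,j}$ controlled by $\binom{\ell}{j}$. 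Testing the equation for $\partial_y^\ell \UU$ against the weighted multiplier $y^{2(\ell-\varepsilon)}\partial_y^{\ell+1}\UU$ and integrating by parts in $y$ over $\R^d \times (0,\YY)$ produces an energy-type inequality bounding $\|y^{\ell-\varepsilon}\nabla\partial_y^{\ell}\UU\|_{L^2(y^\alpha)}$ in terms of the analogous quantities at orders $\leq \ell-1$; the $y^{-j}$ singularities from differentiating the weight are absorbed by one-dimensional weighted Hardy inequalities on $(0,\YY)$ whose constants remain uniformly bounded in $\ell$ as long as $\ell - \varepsilon > 0$.

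The main obstacle is balancing the sources of $\ell$-growth: the Leibniz rule introduces $O(\ell)$ terms per induction step, each carrying a combinatorial prefactor, while the absorption constant produced by integration by parts with the weight $y^{2(\ell-\varepsilon)}$ must provide a compensating gain of order $1/\ell$ so that the cumulative bound grows precisely like $K^\ell \ell!$ and not faster. Tracking these constants rigorously, and verifying that no logarithmic or super-factorial loss appears, is the delicate part of the argument, and it is the reason the weight exponent has to be $\ell - \varepsilon$ rather than $\ell$ itself. A secondary technical issue is the boundary contribution at $y = \YY$ from the integration by parts: it can be treated by an even reflection across $y = \YY$, which is consistent with the Neumann condition $\partial_y \UU|_{y=\YY}=0$ and preserves the equation, yielding constants $C$, $K$ independent of $\YY$.
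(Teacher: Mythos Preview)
The paper does not contain a proof of this proposition: it is quoted verbatim from the companion work \cite[Prop.~2.6]{part_one} and is only \emph{summarized} here (see the opening lines of Section~4). There is therefore no proof in the present paper against which your proposal can be compared.

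As to the proposal itself: the overall strategy --- induction on $\ell$ via weighted energy identities for $\partial_y^\ell\UU$, with Hardy-type inequalities to absorb the singular factors $y^{-j}$ generated by the Leibniz rule --- is the standard route to analytic-type bounds of this kind and is consistent with how such results are obtained in the literature (e.g.\ \cite{tensor_fem,bms20}). You have also correctly identified the crux: the gain of order $1/\ell$ coming from the weight exponent must exactly offset the $O(\ell)$ proliferation of terms, and this is why the exponent is $\ell-\varepsilon$ rather than $\ell$.

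That said, two points in your sketch are not right as written. First, your base case argument is flawed: the claim that ``inserting the weight $y^{-\varepsilon}$ \dots changes the estimate by only a multiplicative constant, since near $y=0$ the extra factor is integrable against $y^\alpha$'' is false in general --- integrability of $y^{\alpha-2\varepsilon}$ near zero does \emph{not} imply $\|y^{-\varepsilon}\nabla\UU\|_{L^2(y^\alpha)}\lesssim\|\nabla\UU\|_{L^2(y^\alpha)}$, because $\nabla\UU$ may itself concentrate at $y=0$. A genuine argument is needed already here (typically a first pass of the same weighted-testing mechanism, or use of the explicit Poisson-type representation of $\UU$). Second, Lemma~\ref{lemma:my_poincare} is a Poincar\'e estimate in the $(x,y)$-variables, not the one-dimensional weighted Hardy inequality on $(0,\YY)$ that you actually need for the absorption step; the relevant Hardy inequality (cf.\ \cite{muckenhoupt72}, used elsewhere in the paper) should be invoked instead. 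Finally, note that in the paper's convention $\ell\in\N$ starts at $1$, so your ``base case $\ell=0$'' is not even part of the statement.
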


Denoting by   $L^2(y^\alpha,(0,\YY);X)$  the Bochner spaces of square integrable functions (with respect to the weight $y^\alpha$) and values in the Banach space $X$, the regularity results of the previous Proposition can be captured by the solution being in some countably normed space. For constants $C,K>0$, we introduce
\begin{align*}
  \mathcal{B}^1_{\varepsilon,0}(C,K;\YY,X) := \Big\{ \VV \in C^\infty((0,\YY);X) : & 
\norm{\VV}_{L^2(y^\alpha,(0,\YY);X)} < C, \\
&\norm{y^{\ell+1-\varepsilon}\VV^{(\ell+1)}}_{L^2(y^\alpha,(0,\YY);X)} < C K^{\ell+1} (\ell+1)! \; \forall \ell \in \N_0
\Big\}.
\end{align*}

\begin{corollary}\label{cor:regularity_y}
Fix $\YY \in (0,\infty]$ and let $\UU^\YY$ solve \eqref{eq:truncated_problem_pw}.
  Then, there are constants $C,K>0$ such that there holds 
  \begin{align}
    \partial_{y}\UU^\YY \in \mathcal{B}^1_{\varepsilon,0}(C,K;\YY,L^2(\R^d))
    \qquad \text{and} \qquad
    \nabla_{x}\UU^\YY \in \mathcal{B}^1_{\varepsilon,0}(C,K;\YY,L^2(\R^d)).
  \end{align}
\end{corollary}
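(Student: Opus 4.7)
The proof should essentially consist of unpacking the Bochner-space definition of $\mathcal{B}^1_{\varepsilon,0}(C,K;\YY,L^2(\R^d))$ and recognizing each required bound as either a well-posedness estimate or an instance of Proposition~\ref{prop:regularity} with the index shifted appropriately. My plan is to verify the two defining conditions (the $L^2$ bound on $\VV$ itself and the weighted bound on each derivative $\VV^{(\ell+1)}$) separately for $\VV = \partial_y \UU^\YY$ and $\VV = \nabla_x \UU^\YY$, in each case applying Fubini to pass between the Bochner norm $\|\cdot\|_{L^2(y^\alpha,(0,\YY);L^2(\R^d))}$ and the full space--time norm $\|\cdot\|_{L^2(y^\alpha,\R^d\times(0,\YY))}$ that appears in Proposition~\ref{prop:regularity}.

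First, I would bound the zeroth-order norm. Since
\[
\|\partial_y \UU^\YY\|_{L^2(y^\alpha,(0,\YY);L^2(\R^d))}^2 + \|\nabla_x \UU^\YY\|_{L^2(y^\alpha,(0,\YY);L^2(\R^d))}^2 = \|\nabla \UU^\YY\|_{L^2(y^\alpha,\R^d\times(0,\YY))}^2,
\]
the desired constant $C$ is immediately supplied by the well-posedness bound of Theorem~\ref{prop:cont_well_posedness} applied to the truncated problem, which gives control in terms of $\|f\|_{L^2(\Omega)}$.

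For the higher-order bounds, I would apply Proposition~\ref{prop:regularity} with $\ell$ replaced by $\ell+1$, giving
\[
\big\|y^{\ell+1-\varepsilon} \nabla \partial_y^{\ell+1}\UU^\YY\big\|_{L^2(y^\alpha,\R^d\times(0,\YY))} \leq CK^{\ell+1}(\ell+1)!\,\|f\|_{L^2(\Omega)}.
\]
The left-hand side dominates both $\|y^{\ell+1-\varepsilon}\partial_y^{\ell+2}\UU^\YY\|$ and $\|y^{\ell+1-\varepsilon}\nabla_x \partial_y^{\ell+1}\UU^\YY\|$ in the $L^2(y^\alpha,\R^d\times(0,\YY))$ norm, because $\nabla$ contains all spatial components together with $\partial_y$. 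The former is, via Fubini, precisely the $(\ell+1)$-th Bochner derivative of $\VV = \partial_y \UU^\YY$ in the $y^{\ell+1-\varepsilon}$-weighted norm, and the latter is the $(\ell+1)$-th Bochner derivative of $\VV = \nabla_x \UU^\YY$; this directly establishes the defining inequality of $\mathcal{B}^1_{\varepsilon,0}$.

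I do not anticipate any serious obstacle. The only minor bookkeeping point is the smoothness requirement $\VV \in C^\infty((0,\YY);L^2(\R^d))$ built into the definition of $\mathcal{B}^1_{\varepsilon,0}$; this follows from the fact that all mixed derivatives $\nabla_x\partial_y^{\ell}\UU^\YY$ lie in $L^2_{\rm loc}$ away from $y=0$ by Proposition~\ref{prop:regularity}, so the Bochner-valued smoothness in $y$ on $(0,\YY)$ is immediate. Thus the corollary reduces to a direct reinterpretation of Proposition~\ref{prop:regularity} combined with a zeroth-order stability estimate, and requires no new analysis.
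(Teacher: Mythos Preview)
Your proposal is correct and is precisely the argument the paper has in mind: the corollary is stated without proof immediately after Proposition~\ref{prop:regularity} and the definition of $\mathcal{B}^1_{\varepsilon,0}$, because it is nothing more than the reinterpretation you describe --- the zeroth-order term comes from the energy stability of the truncated problem, and the higher derivatives are exactly Proposition~\ref{prop:regularity} with the index shifted by one. The only cosmetic point is that the $(1+1/\YY)$ factor from Theorem~\ref{prop:cont_well_posedness} is harmless here since $\YY$ is bounded below in all applications.
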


\subsection{Fully discrete analysis}
In order to derive error bounds, we employ the reformulation in \eqref{eq:modified_BLF} together with the already established decay bounds for the truncation in $\YY$. 

We will need two quasi-interpolation operators -- one for the $x$-variables and one for the $y$-direction.
Their construction and properties are the subject of the next two lemmas.

\begin{lemma}[Interpolation in $x$]
  \label{lemma:interp_x}
  Let $\VHx \subset H^1(\Omega)$ and $\VHl \subset H^{-1/2}(\Gamma)$ be finite dimensional and $\pi_{\Omega}: L^2(\Omega) \to \VHx$ be a linear operator. Then,
  there exists a linear operator $\Pi_x:  L^2(\Omega) \times L^2_{\rho_x}(\R^d)\to \VHx \times L^2_{\rho_x}(\R^d)$
  such that the following properties hold
  for $(u^h,u_\star^h):=\Pi_x(u,u_\star)$ with $u \in H^1(\Omega)$, $u_\star \in H^1_{\rho_x}(\R^d\setminus \Gamma)$ satisfying $u_\star|_\Omega = 0$ and $\tracejump{u_{\star}}= \gamma^- u$: 
  \begin{enumerate}[(i)]
  \item \label{it:interp_x_1} $\gamma^- u^h_{\star} \in (\VHl)^\circ$;
  \item  \label{it:interp_x_2} $\tracejump{u_{\star}^h}= \gamma^- u^h$;
  \item  \label{it:interp_x_3}
    If $\pi_{\Omega}$ is stable in the $H^1(\Omega)$-norm, then
    \begin{align*}
      \|u^h\|_{H^1(\Omega)}^2 + \|u^h_\star\|_{H^1_{\rho_x}(\R^d\backslash\Gamma)}^2
      \lesssim  \norm{u}_{H^1(\Omega)}^2 + \norm{u_\star}_{H^1_{\rho_x}(\R^d\backslash\Gamma)}^2. 
    \end{align*}
        If $\pi_{\Omega}$ is stable in the $L^2(\Omega)$-norm, and $u_\star \in L^2(\R^d)$ then
    \begin{align*}
      \|{u^h}\|_{L^2(\Omega)}^2 + \| {u^h_\star}\|_{L^2(\R^d)}^2
      &\lesssim  \norm{u}_{L^2(\Omega)}^2 + \norm{u_\star}_{L^2(\R^d)}^2.
    \end{align*}
  \item  \label{it:interp_x_4} There hold the approximation properties:  
    \begin{align*}
      \|u^h - u\|_{L^2(\Omega)}^2 + \|u_\star^h - u_\star\|_{L^2_{\rho_x}(\R^d\setminus \Gamma)}^2  
      &\lesssim \|u - \pi_{\Omega} u\|^2_{L^2(\Omega)}, \\
      \|u^h - u\|_{H^1(\Omega)}^2 + \|u_\star^h - u_\star\|_{H^1_{\rho_x}(\R^d\setminus \Gamma)}^2  
      &\lesssim \|u - \pi_{\Omega} u\|^2_{H^1(\Omega)}.
    \end{align*}
  \end{enumerate}
  \begin{proof}
   We note that a very similar operator is introduced in~\cite[Lemma 4.3]{schroedinger}.
    We define:
    \begin{align*}
      u^h:=\pi_{\Omega} u, \qquad
      u^h_\star:=u_\star+ \delta,
    \end{align*}
    where $\delta=-u_\star$ in $\Omega$ and $\delta:=-\mathcal{E}(u - \pi_{\Omega} u)$ in $\R^d \backslash\Omega$, where
    $\mathcal{E}: L^2(\Omega) \to L^2(\R^d)$ denotes the Stein extension operator
    \cite[Chapter VI.3]{stein70} that is stable both in $L^2(\Omega)$ and $H^1(\Omega)$.

    By construction, we have \eqref{it:interp_x_1}, since $u^h_\star=0$ in the interior.
    Since $\gamma^+ \mathcal{E}v =\gamma^- v$ due to the extension property, we
    get \eqref{it:interp_x_2} by 
    $$
    \llbracket{\gamma u_{\star}^h}\rrbracket = \gamma^+ u_\star - \gamma^+ \mathcal{E}(u - \pi_{\Omega} u) = \gamma^- u - \gamma^- u + \gamma^-\pi_\Omega u = \gamma^- u^h. 
    $$    
    The stability estimates follow from the stability of the extension operator and the assumed stabilities of $\pi_\Omega$ as 
    \begin{align*}
      \|{u^h}\|_{L^2(\Omega)}^2 + \| {u^h_\star}\|_{L^2(\R^d)}^2 &\lesssim  \norm{u}_{L^2(\Omega)}^2 + \norm{u_\star}_{L^2(\R^d)}^2 + \norm{\mathcal{E}(u - \pi_{\Omega} u)}_{L^2(\R^d)}^2  \\
      &\lesssim  \norm{u}_{L^2(\Omega)}^2 + \norm{u_\star}_{L^2(\R^d)}^2 + \norm{u - \pi_{\Omega} u}_{L^2(\Omega)}^2 \lesssim  \norm{u}_{L^2(\Omega)}^2 + \norm{u_\star}_{L^2(\R^d)}^2.
    \end{align*}
    The approximation property can be seen in a similar fashion using $\rho_x^{-2} <1$
    \begin{align*}
      \|u^h - u\|_{L^2(\Omega)}^2+ \|u_\star^h - u_\star\|_{L^2_{\rho_x}(\R^d)}^2
      &\leq
        \|\pi_\Omega u - u\|_{L^2(\Omega)}^2+ \|\mathcal{E}(u-\pi_\Omega u)\|_{L^2_{\rho_x}(\R^d)}^2 \\
     &\leq
        \|\pi_\Omega u - u\|_{L^2(\Omega)}^2+ \|\mathcal{E}(u-\pi_\Omega u)\|_{L^2(\R^d)}^2 
        \lesssim
        \|\pi_\Omega u - u\|_{L^2(\Omega)}^2.        
    \end{align*}
    The $H^1$-estimates follows analogously.
  \end{proof}
\end{lemma}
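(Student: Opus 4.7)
The plan is to build $\Pi_x$ by handling the two components separately: the interior part is defined directly through the given $\pi_\Omega$, while the exterior part is obtained by modifying $u_\star$ with a correction chosen so that the prescribed jump relation is preserved. The natural candidate is
\begin{align*}
u^h := \pi_\Omega u, \qquad u_\star^h := u_\star + \delta,
\end{align*}
where $\delta$ vanishes on $\Omega$ (which is consistent with the hypothesis $u_\star|_\Omega = 0$) and, on $\R^d\setminus\overline{\Omega}$, $\delta$ is given by a suitable extension of $u-\pi_\Omega u$ from $\Omega$. Condition \eqref{it:interp_x_1} is then automatic, since $\gamma^-u_\star^h=0$ lies in every polar set $(\VHl)^\circ$, regardless of $\VHl$.

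For the jump condition \eqref{it:interp_x_2}, the key observation is that once $\gamma^-u_\star^h=0$, one only needs to arrange the exterior trace $\gamma^+u_\star^h$. The difference between the required new trace and the original trace $\gamma^+u_\star$ is precisely $\gamma^-(u-\pi_\Omega u)$. The Stein extension operator $\mathcal{E}:L^2(\Omega)\to L^2(\R^d)$ \cite[Ch.~VI.3]{stein70} is tailor-made here: it is simultaneously bounded in $L^2$ and $H^1$, and satisfies $\gamma^+(\mathcal{E} v)=\gamma^-v$ by continuity. Setting $\delta=\pm\mathcal{E}(u-\pi_\Omega u)$ on the exterior (with the sign dictated by the jump convention in the paper) makes the verification of \eqref{it:interp_x_2} a short computation.

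For the stability estimate \eqref{it:interp_x_3} I would split $\|u_\star^h\|$ into the contribution of $u_\star$ and of the correction, and absorb the latter using $\|\mathcal{E}(u-\pi_\Omega u)\|_{L^2(\R^d)}\lesssim\|u-\pi_\Omega u\|_{L^2(\Omega)}\lesssim \|u\|_{L^2(\Omega)}$ (and analogously in $H^1$). The weight $\rho_x^{-2}\leq 1$ is what allows the plain $L^2$-bound from Stein to dominate the $L^2_{\rho_x}$-norm in the region where $\delta\neq 0$; on the unbounded exterior no additional decay is needed because the extension has compact (or at least integrable) support in a suitable sense that does not interfere with the argument. The approximation estimate \eqref{it:interp_x_4} then follows from exactly the same decomposition, with the $u_\star$-contributions cancelling and the residual being $\|\mathcal{E}(u-\pi_\Omega u)\|$, which is controlled by $\|u-\pi_\Omega u\|_{L^2(\Omega)}$ or $\|u-\pi_\Omega u\|_{H^1(\Omega)}$ via the extension bound.

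I expect no deep obstacle; the whole point of the construction is to exploit the simultaneous $L^2$- and $H^1$-stability of the Stein extension, which is precisely what is needed to keep the correction under control in both stability and approximation estimates. The only bookkeeping care is in the signs of the traces and in checking that $\mathcal{E}(u-\pi_\Omega u)$, restricted to the exterior, is indeed what puts $u_\star^h$ into $H^1_{\rho_x}(\R^d\setminus\Gamma)$ with the right polar condition; both reduce to standard trace-matching identities.
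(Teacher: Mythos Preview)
Your proposal is correct and matches the paper's proof essentially step for step: the paper defines $u^h:=\pi_\Omega u$ and $u^h_\star:=u_\star+\delta$ with $\delta=-u_\star$ in $\Omega$ (which is zero by assumption) and $\delta=-\mathcal{E}(u-\pi_\Omega u)$ in $\R^d\setminus\Omega$ via the Stein extension, then verifies (i)--(iv) exactly as you outline, including the use of $\rho_x^{-2}\le 1$ to pass from the weighted to the unweighted norm. The only detail you left open is the sign, which the paper fixes as $-\mathcal{E}(u-\pi_\Omega u)$.
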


\begin{lemma}[Interpolation in $y$]
  \label{lemma:interp_y}
   Let $\YY \in (0,\infty)$ and $\UU^\YY$ solve \eqref{eq:truncated_problem_pw}.  
  Let $\TT_y$ be a geometric grid on $(0,\YY)$ with mesh grading factor $\sigma$,
  and $L$-refinement layers towards $0$ as given by \eqref{eq:geometric_mesh}. Let $\varepsilon>0$ be given by Proposition~\ref{prop:regularity}.
  Then, choosing $L=p$, 
  there exists an operator
  $\Pi_y: \HHwry \rightarrow \HHwry$
    such that $\Pi_y{\UU}(x,\cdot) \in \mathcal{S}^{p,1}(\TT_y)$ for almost all $x \in \R^d$,
    and such that the following estimate holds:
  \begin{align*}
    \int_{0}^{\YY}\int_{\R^d}{
    y^\alpha |\nabla(\UU^\YY - \Pi_y {\UU^\YY})|^2 dx dy }
    &\leq C e^{-2\kappa p} \YY^{2\varepsilon}. 
  \end{align*} 
  The constants $C,\kappa>0$ are independent of $p,\YY$.
\end{lemma}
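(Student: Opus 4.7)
The plan is to build $\Pi_y$ via a tensor-product construction: pointwise in $x$, apply a one-dimensional $hp$-interpolant $\pi_y: H^1(y^\alpha,(0,\YY)) \to \mathcal{S}^{p,1}(\TT_y)$ with suitable stability and approximation properties on the geometric mesh, and set $(\Pi_y \UU^\YY)(x,y) := \pi_y(\UU^\YY(x,\cdot))(y)$. Reading $\HHwry$ as a Bochner space in $y$ with values in a suitable function space in $x$, this construction yields values in the claimed space, and crucially $\pi_y$ commutes with $\nabla_x$.

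\textbf{Step 1 (choice of $\pi_y$).} I would reuse the one-dimensional operator from \cite[Lem.~11]{tensor_fem}, which is tailored to the $y^\alpha$-weighted analytic setting on geometric meshes. It is a Gauss--Lobatto type interpolant on every element $(x_\ell,x_{\ell+1})$ with $\ell \geq 1$, while on the smallest element $(0,\sigma^L)$ it is replaced by a weighted polynomial projection that remains stable in $H^1(y^\alpha,(0,\sigma^L))$ and matches the nodal value at $y=\sigma^L$ to ensure global continuity.

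\textbf{Step 2 (splitting the error).} Using Fubini in the $\nabla$-term and the fact that $\pi_y$ commutes with $\nabla_x$, I decompose
\begin{align*}
\int_0^{\YY}\!\int_{\R^d} y^\alpha |\nabla(\UU^\YY - \Pi_y \UU^\YY)|^2 \, dx\,dy
&\leq \int_{\R^d}\!\int_0^{\YY} y^\alpha |\partial_y(\UU^\YY - \pi_y \UU^\YY)|^2\, dy\,dx \\
&\quad + \int_{\R^d}\!\int_0^{\YY} y^\alpha |\nabla_x \UU^\YY - \pi_y \nabla_x \UU^\YY|^2\, dy\,dx.
\end{align*}
By Corollary~\ref{cor:regularity_y}, both $\partial_y \UU^\YY$ and $\nabla_x \UU^\YY$ belong to the countably normed space $\mathcal{B}^1_{\varepsilon,0}(C,K;\YY,L^2(\R^d))$, so after Fubini each fiber inherits the necessary analytic-type regularity in $y$ with an $L^2(\R^d)$-controlled constant.

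\textbf{Step 3 (element-wise $hp$-estimates).} On the refinement layers toward $y=0$, I invoke the weighted $hp$-approximation theory on geometric meshes, as in \cite[Lem.~11]{tensor_fem}. With the choice $L = p$, the geometric decay of element sizes combined with the factorial growth in the $\mathcal{B}^1_{\varepsilon,0}$-bounds yields exponential convergence $e^{-2\kappa p}$ with some $\kappa>0$ independent of $p$ and $\YY$. On the growth layers $\ell = L,\dots,L+M$, the function is smooth but the weights $y^{\ell-\varepsilon}$ controlling the regularity are no longer bounded by $1$; standard $hp$-estimates on each element still give geometric convergence in $p$, but multiplied by an algebraic factor that, summed over the $O(\ln\YY)$ growth elements, is dominated by the contribution of the last element of size $\simeq \YY$ and yields exactly the $\YY^{2\varepsilon}$ prefactor.

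\textbf{Main obstacle.} The principal technical difficulty is matching the target weight $y^\alpha$ against the weight $y^{\ell-\varepsilon}$ appearing in the regularity bounds on the growth region, where $y$ ranges up to $\YY$; all element-by-element bookkeeping has to be done carefully to show that the resulting $\YY$-dependence is algebraic and, most importantly, $p$-independent, so that the exponential $p$-rate is preserved. This is precisely what allows the balancing $p \sim \ln h^{-1}$ and $\YY \sim h^{-2/\mu}$ used in Corollary~\ref{cor:convergenceRates} to recover a full algebraic rate of convergence.
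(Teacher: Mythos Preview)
Your overall architecture matches the paper's, but there is a genuine gap in Step~2 concerning the $\partial_y$-error. You treat the two terms symmetrically: for the $\nabla_x$-error you use $\nabla_x\UU^\YY\in\mathcal{B}^1_{\varepsilon,0}$, and for the $\partial_y$-error you want to use $\partial_y\UU^\YY\in\mathcal{B}^1_{\varepsilon,0}$. The first works because $\pi_y$ commutes with $\nabla_x$, so the error becomes $\nabla_x\UU^\YY-\pi_y(\nabla_x\UU^\YY)$ and \cite[Lem.~11(i)]{tensor_fem} applies directly. For the $y$-derivative, however, $\pi_y$ does \emph{not} commute with $\partial_y$, so $\partial_y(\UU^\YY-\pi_y\UU^\YY)$ is not of the form $w-\pi_y w$ for $w=\partial_y\UU^\YY$. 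The result that does bound $\|\partial_y(\UU^\YY-\pi_y\UU^\YY)\|$, namely \cite[Lem.~11(ii)]{tensor_fem}, requires $\UU^\YY\in\mathcal{B}^2_{\varepsilon,0}(C,K;\YY,L^2(\R^d))$, and in particular $\UU^\YY\in L^2(y^\alpha,\R^d\times(0,\YY))$. In the full-space setting this fails: $\UU^\YY$ lies only in the $\rho$-weighted space, not in unweighted $L^2$ in $x$. Your sentence ``after Fubini each fiber inherits the necessary analytic-type regularity'' glosses over exactly this point.

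The paper's remedy is to abandon the Gauss--Lobatto interpolant and use instead the Babu\v{s}ka--Szab\'o operator $\widehat{\Pi}_p v(y)=v(-1)+\int_{-1}^y\Pi^{L^2}_{p-1}v'\,dt$ on each element away from $0$, together with a linear interpolant at the two points $\sigma^L/2$ and $\sigma^L$ on the first element. The crucial feature is the commutator identity $(\widehat{\Pi}_p v)'=\Pi^{L^2}_{p-1}v'$, so that $\partial_y\Pi_y\UU^\YY$ on every element is an $L^2$-type projection of $\partial_y\UU^\YY$ and depends on $\partial_y\UU^\YY$ alone. Then the $\mathcal{B}^1_{\varepsilon,0}$-regularity of $\partial_y\UU^\YY$ really does suffice, and the element-by-element scaling (together with a Hardy inequality on the first element) produces the stated bound. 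A related subtlety you skipped is the mapping property $\Pi_y:\HHwry\to\HHwry$: since $\UU^\YY\notin L^2(y^\alpha,\R^d\times(0,\YY))$, this is not automatic and the paper argues it separately via the interpolatory nodes and the fundamental theorem of calculus.
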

\begin{proof}
We use the $hp$-interpolation operator from \cite[Sec.~5.5.1]{tensor_fem} for $\Pi_y$. This operator is constructed on a geometric mesh in an element-by-element way. On the first element a linear interpolant in $\sigma^L/2$ and $\sigma^L$ is used, while the remaining elements are mapped to the reference element, on which a polynomial approximation operator that has exponential convergence properties (in the polynomial degree) for analytic functions is used. 

For the operator on the reference element, we take the  Bab\u{u}ska-Szab\'o polynomial approximation operator $\widehat{\Pi}_p$ on $(-1,1)$  defined as 
\begin{align*}
\widehat{\Pi}_p v (y) := v(-1) +  \int_{-1}^y \Pi^{L^2}_{p-1} v'(t) dt,
\end{align*}
where $\Pi^{L^2}_{p-1} : L^2(-1,1) \rightarrow P_{p-1}$ denotes the $L^2$-orthogonal projection, see e.g. \cite[Exa.~3.17]{ApeMel15}. By construction, this operator has the commutator property
\begin{align*}
(\widehat{\Pi}_p v)' = \Pi^{L^2}_{p-1} v'.
\end{align*}
Regularity in countably normed spaces gives exponential error bounds for $\Pi_y$, see \cite[Lem.~11]{tensor_fem}. In fact, for functions in $\mathcal{B}^1_{\varepsilon,0}(C,K;\YY,L^2(\R^d))$, one obtains a bound in $L^2(y^\alpha,\R^d\times (0,\YY))$.
Consequently, we can employ Proposition~\ref{prop:regularity} to obtain $\nabla_x \UU^\YY \in \mathcal{B}^1_{\varepsilon,0}(C,K;\YY,L^2(\R^d))$ and together with \cite[Lem.~11(i)]{tensor_fem} this gives the error estimate 
\begin{align*}
    \int_{0}^{\YY}{
    y^\alpha \|\nabla_x\UU^\YY(\cdot,y) - \Pi_y {\nabla_x\UU^\YY(\cdot,y)}\|_{L^2(\R^d)}^2 dy }
    &\leq C e^{-2\kappa p} \YY^{2\varepsilon}
  \end{align*}
  for a constant $\kappa >0$. Interchanging $\Pi_y$ and $\nabla_x$ gives the estimate for the $x$-derivatives. 
  
For the $y$-derivatives the situation is a bit more involved, as the same argument can not be made as $\Pi_y$ and $\partial_y$ do not commute. 
\cite[Lem.~11(ii)]{tensor_fem} gives an exponentially convergent error bound for the $y$-derivative provided $\UU^\YY \in \mathcal{B}^2_{\varepsilon,0}(C,K;\YY,L^2(\R^d))$ (essentially meaning $\partial_y \UU^\YY \in \mathcal{B}^1_{\varepsilon,0}(C,K;\YY,L^2(\R^d))$ and $\UU^\YY \in L^2(y^\alpha,\R^d\times(0,\YY))$).
However, in our setting, the requirement $\UU^\YY \in L^2(y^\alpha,\R^d\times(0,\YY))$ does not hold. 
Nonetheless, we have Corollary~\ref{cor:regularity_y} giving $\partial_y \UU^\YY \in \mathcal{B}^1_{\varepsilon,0}(C,K;\YY,L^2(\R^d))$, which is enough to regain the exponential estimate as seen in the following. 

On the first element $(0,\sigma^L) \in \mathcal{T}_y$, the definition of the piecewise linear interpolation gives 
\begin{align*}
\partial_y \Pi_y v(y) = \frac{v(\sigma^L)-v(\sigma^L/2)}{\sigma^L/2} = \frac{2}{\sigma^L}\int_{\sigma^L/2}^{\sigma^L} \partial_y v(y) dy, 
\end{align*}
which is nothing else than the $L^2$-orthogonal projection of $\partial_y v$ on $(\sigma^L/2,\sigma^L)$.
By choice of the Bab\u{u}ska-Szab\'o operator and denoting by $\widetilde\Pi^{L^2}_{p-1}$ the mapped $L^2$-projection onto an element in $\mathcal{T}_y$, we have due to the commutator property and the preceding discussion
\begin{align*}
\partial_y(\Pi_y \UU^\YY)|_{\R^d\times K_i} = \widetilde\Pi^{L^2}_{p-1} \partial_y \UU^\YY|_{\R^d\times K_i} \in L^2(y^\alpha,\R^d\times K_i) \qquad \forall K_i \in \mathcal{T}_y
\end{align*}
since $\partial_y\UU \in L^2(y^\alpha,\R^d\times K_i)$, which implies that $\partial_y \Pi_y \UU^\YY \in L^2(y^\alpha,\R^d\times (0,\YY))$. The error estimate for the $y$-derivative follows from scaling arguments.
 More precisely, we decompose 
\begin{align*}
\norm{\partial_y(\UU^\YY-\Pi_y\UU^\YY)}_{L^2(y^\alpha,\R^d \times (0,\YY))}^2 = \sum_{K_i \in \mathcal{T}_y} \norm{\partial_y(\UU^\YY-\Pi_y\UU^\YY)}_{L^2(y^\alpha,\R^d \times K_i)}^2,
\end{align*}
where $K_i =(x_i,x_{i+1})$.
Using a Hardy inequality, one obtains a bound for the approximation error on the first element using second derivatives only; see \cite[Lem.~15]{tensor_fem}. Together with a scaling argument this leads to
\begin{align*}
\norm{\partial_y(\UU^\YY-\Pi_y\UU^\YY)}_{L^2(y^\alpha,\R^d \times (0,\sigma^L))}^2 \lesssim \sigma^{\varepsilon L}\norm{\partial_{yy}\UU^\YY}_{L^2(y^{\alpha+2-2\varepsilon},\R^d \times (0,\sigma^L))}^2. 
\end{align*}
By Corollary~\ref{cor:regularity_y} we can bound the right-hand side. 
For the remaining elements, we employ a scaling argument from
\cite[Thm.~3.13]{ApeMel15}. 
Denoting by $h_{K_i}$ the diameter of $K_i$, we infer $y \sim h_{K_i}$ on $K_i$ for $i>0$.
For any univariate function $v$  satisfying $\norm{y^{\ell-\varepsilon}v^{(\ell+1)}}_{L^2(y^\alpha,(0,\YY))} < C K^{\ell} \ell!$ for all $\ell \in \N_0$  there holds
\begin{align}\label{eq:PIy_tmp1}
\norm{\widehat{v}^{(\ell+1)}}_{L^2(-1,1)}^2 &= \frac{2}{h_{K_i}} h_{K_i}^{2(\ell+1)}  \norm{v^{(\ell+1)}}_{L^2(K_i)}^2 \lesssim 
h_{K_i}^{2\varepsilon-\alpha+1}  \norm{y^{\ell-\varepsilon}v^{(\ell+1)}}_{L^2(y^{\alpha},K_i)}^2 \nonumber\\
&\lesssim h_{K_i}^{2\varepsilon - \alpha+1} K^{\ell}\ell!,
\end{align}
where  $\widehat v$ is the pull-back of $v$ to the reference element. The exponential approximation properties of the Bab\u{u}ska-Szab\'o polynomial approximation operator then provides
\begin{align}\label{eq:PIy_tmp2}
\norm{\widehat{v}-\widehat \Pi_p \widehat v}_{H^1(-1,1)}^2 \lesssim h_{K_i}^{2\varepsilon - \alpha+1} e^{-\kappa p}.
\end{align}
Together with
\begin{align*}
\norm{(v-\Pi_y v)'}_{L^2(y^\alpha,K_i)}^2 \lesssim h_{K_i}^{\alpha-1} \norm{(\widehat v-\widehat\Pi_p \widehat v)'}_{L^2(-1,1)}^2 ,
\end{align*}
we can employ \eqref{eq:PIy_tmp2} for $v(y) = \UU(y,\cdot)$ and square integrate over $\R^d$, noting that \eqref{eq:PIy_tmp1} holds due to Corollary~\ref{cor:regularity_y}. 
Summing over $i$ and using $\sum_i h_{K_i}^{2\varepsilon} \lesssim \YY^{2\varepsilon}$ shows the claimed estimate.

Finally, to show that the operator does indeed map to $\HHwry$, we note that
  by the previous considerations we have $\partial_y \Pi_y \UU \in L^2(y^\alpha,\R^d \times (0,\YY))$
  as well as $\Pi_y \UU(\cdot,y)=\UU(\cdot,y) \in L_{\rho_x}^2(\R^d)$ for certain values $y \in (0,\YY)$
  where it is interpolatory. By the fundamental theorem of calculus, this is sufficient to show that $\Pi_y \UU \in L^2_\rho(y^\alpha,\R^d\times (0,\YY))$.
\end{proof}



We can now define an interpolation operator acting on both $x$ and $y$ in a tensor product fashion.
In order to keep notation compact,  we write  $\|\cdot\|_{L^2}$ for the $L^2(\Omega)\times L^2(\R^d \backslash \Gamma)$-norm and 
 $\|\cdot\|_{H^1_{\rho_x}}$ for the $H^1(\Omega)\times H^1_{\rho_x}(\R^d\backslash\Gamma)$-norm. 

\begin{lemma}[Tensor approximation]
  \label{lemma:tensor_interpolation}
  Fix $\YY \in (0,\infty)$ and let $\UU = (\UU_\Omega,\UU_\star) \in \HH_\YY$. Define $\Pi(\UU_\Omega,\UU_\star):=\Pi_{x} \otimes \Pi_{y}(\UU_\Omega,\UU_\star) \in \HH_{h,\YY}$ with the operators $\Pi_x$ from Lemma~\ref{lemma:interp_x} and $\Pi_y$ from Lemma~\ref{lemma:interp_y}. Assume that the operator $\pi_\Omega$ in the definition of $\Pi_x$ is both $L^2$- and $H^1$-stable. 
  Then, the following approximation estimate holds
  \begin{align*}
    \Big\|\UU - \Pi \UU\|_{\HH_\YY}^2
    &\lesssim
      \int_{0}^{\YY}
      {
      y^\alpha \Big( \big\|\nabla (1-\Pi_y)\UU(y)\big\|_{L^2}^2  +
      \big\|\nabla (1-\pi_\Omega) \UU_\Omega(y)\|_{L^2(\Omega)}^2   \Big)\,dy. 
      } 
  \end{align*}
\end{lemma}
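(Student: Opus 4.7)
The plan is to exploit the tensor-product structure $\Pi = \Pi_x \otimes \Pi_y$ and reduce the estimate to the individual approximation properties of $\Pi_x$ and $\Pi_y$. I would first split the error via the telescoping identity
\[
  \UU - \Pi\UU = (I - \Pi_y)\UU + (I - \Pi_x)\Pi_y\UU,
\]
and verify that each summand still lies in $\HH_\YY$: $\Pi_y$ acts only in the $y$-variable and therefore commutes with traces and jumps taken in $x$, while $\Pi_x$ preserves the trace/jump conditions defining $\HH_\YY$ by parts (i)--(ii) of Lemma~\ref{lemma:interp_x}. Consequently $\|\UU - \Pi\UU\|_{\HH_\YY}^2 \lesssim \|(I - \Pi_y)\UU\|_{\HH_\YY}^2 + \|(I - \Pi_x)\Pi_y\UU\|_{\HH_\YY}^2$.

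For the first summand I would use that $(I - \Pi_y)\UU \in \HH_\YY$ so that the coercivity of $B^\YY$ on $\HH_\YY$ from Theorem~\ref{prop:cont_well_posedness} (which rests on the Poincar\'e estimate of Lemma~\ref{lemma:my_poincare}) allows one to replace the full $\HH_\YY$-norm by its gradient and trace parts; the trace contribution at $y=0$ is absorbed into the gradient via the trace inequality~\eqref{eq:trace}, yielding
\[
  \|(I - \Pi_y)\UU\|_{\HH_\YY}^2 \;\lesssim\; \int_0^\YY y^\alpha \|\nabla(I-\Pi_y)\UU(y)\|_{L^2}^2\,dy,
\]
which is precisely the first term of the target bound.

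For the second summand I would apply Lemma~\ref{lemma:interp_x}\,(iv) fiberwise in $y$ to the pair $\Pi_y\UU$, then use the commutativity $\pi_\Omega\,\Pi_y = \Pi_y\,\pi_\Omega$ (the two operators act in disjoint variables) together with the integrated $L^2(y^\alpha)$- and $H^1(y^\alpha)$-stability of $\Pi_y$ coming from its construction in Lemma~\ref{lemma:interp_y}. The $\partial_y$-contributions in the $\HH_\YY$-norm are handled via $[\Pi_x,\partial_y]=0$, reducing them to the $L^2$-estimate of Lemma~\ref{lemma:interp_x}\,(iv) applied to $\partial_y\UU_\Omega$ or $\partial_y\Pi_y\UU_\Omega$, and are ultimately absorbed into the first target term using the commutator structure of the polynomial approximation in the proof of Lemma~\ref{lemma:interp_y}.

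The main obstacle is the final reduction from the full $H^1(\Omega)$-norm on the right-hand side (arising naturally from Lemma~\ref{lemma:interp_x}\,(iv)) to the pure gradient norm $\|\nabla(I-\pi_\Omega)\UU_\Omega(\cdot,y)\|_{L^2(\Omega)}^2$ appearing in the claim. For the $L^2(\Omega)$-part of $\|(I - \pi_\Omega)\UU_\Omega(\cdot,y)\|_{H^1(\Omega)}^2$, one again invokes the coercivity-Poincar\'e mechanism for $\HH_\YY$, exploiting that the full error $\UU - \Pi\UU \in \HH_\YY$ and that the Stein extension in the construction of $\Pi_x$ is arranged so that the $L^2$-parts of the $\pi_\Omega$-error are dominated by the gradient parts when measured in the combined $\HH_\YY$-norm. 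Thus coercivity plays a structural role throughout, bridging the fiberwise approximation estimates of Lemmas~\ref{lemma:interp_x} and~\ref{lemma:interp_y} with the gradient-only bound in the claim.
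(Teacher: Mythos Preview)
Your telescoping identity
\[
  \UU - \Pi\UU \;=\; (I-\Pi_y)\UU \;+\; (I-\Pi_x)\Pi_y\UU
\]
is the \emph{opposite} order from the paper, which inserts $\Pi_x\otimes I$ in the middle:
\[
  \UU - \Pi\UU \;=\; (I-\Pi_x)\UU \;+\; \Pi_x(I-\Pi_y)\UU.
\]
This choice is not cosmetic. In the paper's ordering the second summand calls for the \emph{stability} of $\Pi_x$, which is precisely what Lemma~\ref{lemma:interp_x}\,(iii) provides (in both $L^2$ and $H^1$). In your ordering the second summand, after applying Lemma~\ref{lemma:interp_x}\,(iv) and commuting $\pi_\Omega$ with $\Pi_y$, requires the $L^2(y^\alpha,(0,\YY))$-stability of $\Pi_y$, i.e.\ an estimate of the form
\[
  \int_0^{\YY} y^\alpha \|\Pi_y w(\cdot,y)\|_{H^1(\Omega)}^2\,dy
  \;\lesssim\;
  \int_0^{\YY} y^\alpha \|w(\cdot,y)\|_{H^1(\Omega)}^2\,dy.
\]
Lemma~\ref{lemma:interp_y} does \emph{not} establish this, and in fact it fails: on the first element $(0,\sigma^L)$ the operator $\Pi_y$ is a linear interpolant using the point values at $\sigma^L/2$ and $\sigma^L$, and point evaluation is not bounded on $L^2(y^\alpha)$. (The Babu\v{s}ka--Szab\'o operator on the remaining elements likewise requires $H^1$-regularity.) So the sentence ``together with the integrated $L^2(y^\alpha)$- and $H^1(y^\alpha)$-stability of $\Pi_y$ coming from its construction in Lemma~\ref{lemma:interp_y}'' is where the argument breaks.

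Your treatment of the first summand $(I-\Pi_y)\UU$ via Poincar\'e and the trace inequality is correct and matches the paper's opening step. The handling of $\partial_y$-contributions via $[\Pi_x,\partial_y]=0$ is also the right idea. But the final paragraph on the ``main obstacle'' is too vague to count as an argument: invoking ``the coercivity--Poincar\'e mechanism'' does not explain how an $L^2(\Omega)$-piece of $(I-\pi_\Omega)\UU_\Omega$ is dominated by its gradient, since $(I-\pi_\Omega)\UU_\Omega$ has no mean-zero or boundary condition on $\Omega$. The fix is simply to reverse the telescoping so that only the (available) stabilities of $\Pi_x$ are used; then the approximation property of $\Pi_y$ enters through $\Pi_x(I-\Pi_y)\UU$ and no $\Pi_y$-stability is needed.
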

\begin{proof}
By the Poincar\'e inequality \eqref{eq:my_poincare} and the trace inequality \eqref{eq:trace}, we only have to estimate the gradient norms. We start with the $x$-derivatives. Employing the $H^1$-stability
      and approximation properties of $\Pi_x$
      from Lemma~\ref{lemma:interp_x}~(\ref{it:interp_x_3}) and (\ref{it:interp_x_4}) gives 
  \begin{align*}
    \int_{0}^{\YY}
    {
      y^\alpha \big\|\nabla_x ( \UU - \Pi \UU)\|_{L^2}^2 \,dy
    }
    &\lesssim
    \int_{0}^{\YY}
      y^\alpha \big\|\nabla_x \UU -  \nabla_x (\Pi_x \otimes I) \UU\|_{L^2}^2 dy
       \\
      &\qquad +
        \int_{0}^{\YY}  y^\alpha \big\|\nabla_x (\Pi_x \otimes I) \UU - \nabla_x (\Pi_x \otimes \Pi_y) \UU\|_{L^2}^2 \,dy
     \\    
    &\lesssim
      \int_{0}^{\YY}
    {
      y^\alpha \big\| (I- \pi_\Omega) \UU_{\Omega}(y)\big\|_{H^1(\Omega)}^2 dy
      +     \int_{0}^{\YY} y^\alpha\big\| (I- \Pi_y) \UU(y)\big\|_{H^1_{\rho_x}}^2\,dy
      }.
      \end{align*}
Employing again Poincar\'e inequalities, we can reduce the right-hand side to norms of derivatives only.
      For the $y$-derivative, we proceed similarly using the $L^2$- stability and approximation properties of $\Pi_x$ 
      \begin{align*}
        \int_{0}^{\YY}
        {
        y^\alpha \big\|\partial_y (\UU -  \Pi \UU) \|_{L^2}^2 \,dy
        }
        &\lesssim
          \int_{0}^{\YY}
          y^\alpha \big\|\partial_y \UU - \partial_y (\Pi_x \otimes I) \UU\|_{L^2}^2 dy \\
          & \qquad +
           \int_{0}^{\YY} y^\alpha \big\|\partial_y (\Pi_x \otimes I) \UU - \partial_y (\Pi_x \otimes \Pi_y) \UU\|_{L^2}^2 \,dy
           \\
        &\lesssim
           \int_{0}^{\YY}
          {
          y^\alpha \| (1- \pi_\Omega) \partial_y \UU_\Omega(y) \|_{L^2(\Omega)}^2 +
          y^\alpha \big\|\partial_y(I - \Pi_y) \UU(y)\|_{L^2}^2 \,dy
          },
      \end{align*}
      which finishes the proof.
\end{proof}

In order to obtain a best-approximation estimate for the semi-discretization, we observe that the difference $\UU-\UU_h$ satisfies some form of Galerkin orthogonality.

\begin{lemma}[Galerkin orthogonality]
  \label{lemma:galerkin_ortho}
  Let $\YY>0$,  $\UU^\YY = (\UU^\YY_\Omega,\UU^\YY_\star) \in \HH_\YY$ be the solution of~\eqref{eq:truncated_BLF_eq}
  and $\UU_h^\YY \in \HH_{h,\YY}$ solve \eqref{eq:truncated_BLF_eq_discrete}.
  Then, for all $\VV_h^\YY = (\VV_\Omega^\YY,\VV_\star^\YY) \in \HH_{h,\YY}$ and $\lambda_h\in \VHl$, there holds
  \begin{align*}
    B^\YY(\UU^\YY - \UU^\YY_h,\VV^\YY_h)
    &= \int_{0}^{\YY}{y^\alpha\big<\normaljumps{\UU^\YY_\star}- \lambda_h, \gamma_{\Gamma}^- {\VV_\star^\YY}\big>_{L^2(\Gamma)}\,dy}.
  \end{align*}
\end{lemma}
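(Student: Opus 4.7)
The strategy is to exploit the only way in which $\HH_{h,\YY}$ fails to be a subspace of $\HH_\YY$: namely, the discrete trace $\gamma^-_\Gamma \VV^h_\star$ is only constrained to lie in the polar set $(\VHl)^\circ$ rather than to vanish. Consequently, we cannot simply subtract \eqref{eq:truncated_BLF_eq} and \eqref{eq:truncated_BLF_eq_discrete} after testing, but the difference will be a single lateral boundary contribution on $\Gamma \times (0,\YY)$, which is exactly the right-hand side in the claim. I plan to identify this contribution by performing integration by parts on $B^\YY(\UU^\YY, \VV_h)$, combined with the strong form of the continuous problem, and then invoke the polar-set constraint to insert the free parameter $\lambda_h$.

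First, I will extract from \eqref{eq:truncated_BLF_eq} (by choosing compactly supported test functions in $\HH_\YY$) the pointwise identities $-\fdiv(y^\alpha \mathfrak{A}_x \nabla \UU^\YY_\Omega) = 0$ in $\Omega \times (0,\YY)$, $-\fdiv(y^\alpha \nabla \UU^\YY_\star) = 0$ in $(\R^d \setminus \Gamma) \times (0,\YY)$, the Neumann condition $\partial_y \UU^\YY = 0$ at $y = \YY$, the Robin conditions at $y = 0$, and, crucially, the lateral transmission identity
\begin{align*}
\partial^-_{\nu,\Gamma} \UU^\YY_\Omega + \partial^+_{\nu,\Gamma} \UU^\YY_\star = 0 \qquad \text{on } \Gamma \times (0,\YY),
\end{align*}
obtained by testing against $\VV \in \HH_\YY$ whose $\Omega$-trace is arbitrary on $\Gamma$ (while $\gamma^-_\Gamma \VV_\star = 0$).

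Next, with $\VV_h = (\VV^h_\Omega, \VV^h_\star) \in \HH_{h,\YY}$, I integrate by parts separately on $\Omega \times (0,\YY)$ and on $(\R^d\setminus\overline{\Omega}) \times (0,\YY)$. The PDEs and the $y=0$, $y=\YY$ boundary conditions collapse the interior contributions into the source term $d_\beta(f,\trace \VV^h_\Omega)$ plus the lateral pieces $\int_0^\YY y^\alpha \langle \partial^-_{\nu,\Gamma} \UU^\YY_\Omega, \gamma^-_\Gamma \VV^h_\Omega\rangle \, dy$ from the interior and $\int_0^\YY y^\alpha \bigl[\langle \partial^-_{\nu,\Gamma} \UU^\YY_\star, \gamma^-_\Gamma \VV^h_\star\rangle - \langle \partial^+_{\nu,\Gamma} \UU^\YY_\star, \gamma^+_\Gamma \VV^h_\star\rangle\bigr] dy$ from the exterior side. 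Using the jump condition $\gamma^+_\Gamma \VV^h_\star = \gamma^-_\Gamma \VV^h_\star - \gamma^-_\Gamma \VV^h_\Omega$ from the definition of $\HH_{h,\YY}$, this last piece rearranges as $\int_0^\YY y^\alpha \langle \normaljumps{\UU^\YY_\star}, \gamma^-_\Gamma \VV^h_\star\rangle \, dy + \int_0^\YY y^\alpha \langle \partial^+_{\nu,\Gamma} \UU^\YY_\star, \gamma^-_\Gamma \VV^h_\Omega\rangle \, dy$; the transmission identity then cancels both $\gamma^-_\Gamma \VV^h_\Omega$ terms, and subtracting \eqref{eq:truncated_BLF_eq_discrete} finally yields $B^\YY(\UU^\YY - \UU_h^\YY, \VV_h) = \int_0^\YY y^\alpha \langle \normaljumps{\UU^\YY_\star}, \gamma^-_\Gamma \VV^h_\star\rangle \, dy$. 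To insert $\lambda_h$, I observe that in the tensor representation $\VV^h_\star(\cdot,y) = \sum_j v_{j,\star}\varphi_j(y)$ each spatial factor satisfies $\gamma^-_\Gamma v_{j,\star} \in (\VHl)^\circ$, so $\gamma^-_\Gamma \VV^h_\star(\cdot,y) \in (\VHl)^\circ$ for a.e.\ $y$, and hence $\langle \lambda_h, \gamma^-_\Gamma \VV^h_\star\rangle_{L^2(\Gamma)} = 0$ pointwise in $y$, allowing $\lambda_h$ to be added freely.

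The main obstacle is rigorously justifying the integration by parts for $\UU^\YY_\star$, which is only in $H^1_\rho(y^\alpha, \R^d\setminus\Gamma\times(0,\YY))$ and hence has only weak normal traces on $\Gamma$; this, together with identifying the transmission condition on the lateral boundary, must be done via duality pairings $\langle \cdot, \cdot\rangle_{H^{-1/2}\times H^{1/2}}$ and a density argument, but is by now standard once the strong PDE is extracted from the variational formulation.
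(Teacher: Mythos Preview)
Your proposal is correct and follows essentially the same route as the paper's proof: both arguments use the pointwise equations satisfied by $\UU^\YY$ and integrate by parts against the discrete test function $\VV_h^\YY$, observing that the only surviving term beyond $d_\beta(f,\trace\VV^h_\Omega)$ is the lateral contribution $\int_0^\YY y^\alpha\langle\normaljumps{\UU^\YY_\star},\gamma^-_\Gamma\VV^h_\star\rangle\,dy$, after which the polar-set condition $\gamma^-_\Gamma\VV^h_\star\in(\VHl)^\circ$ allows free subtraction of $\lambda_h$. Your write-up is in fact more explicit than the paper's (which condenses the integration-by-parts bookkeeping into a single sentence), and your identification of the lateral transmission condition $\partial^-_{\nu,\Gamma}\UU^\YY_\Omega+\partial^+_{\nu,\Gamma}\UU^\YY_\star=0$ as the mechanism that cancels the $\gamma^-_\Gamma\VV^h_\Omega$ terms is exactly the right point; just be mindful that the interior conormal involves the coefficient $\mathfrak{A}$.
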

\begin{proof}
  Compared to ``standard'' Galerkin orthogonality, we observe
  that $\VV_h^\YY$ is not an admissible
  test function in~\eqref{eq:truncated_BLF_eq} due to the weak condition of $\gamma_{\Gamma}^-\VV_\star^\YY \in (\VHl)^\circ$
  compared to $\gamma_{\Gamma}^-\VV_\star^\YY \in (H^{-1/2}(\Gamma))^\circ=\{0\}$. Also,
   if we work in the $\HHwr$-setting (i.e. working with global functions instead of pairs),
  the test function $\VV_\Omega^\YY \chi_{\Omega}+\VV_{\star}^\YY \chi_{\Omega^c}$ is not continuous along $\Gamma$
  due to a possible jump of size $\gamma_{\Gamma}^-\VV_\star^\YY$. However, if we use the pointwise equation
  (\ref{eq:truncated_problem_pw})  and integrate back by parts, we get that
  \begin{align*}
    B^\YY(\UU^\YY - \UU_h^\YY,\VV_h^\YY)
    &= \int_{0}^{\YY}{y^\alpha\big<\normaljumps{\UU^\YY_\star}, \gamma_{\Gamma}^-{\VV_\star^\YY}\big>_{L^2(\Gamma)}\,dy}.
  \end{align*}
Since $\langle{\lambda_h,\gamma_{\Gamma}^-\VV_\star^\YY}\rangle_{L^2(\Gamma)}$ vanishes due to the requirement in $\gamma_{\Gamma}^-\VV_\star^\YY \in (\VHl)^\circ$,
we can subtract such a term from the right-hand side without changing the equality, which shows the
stated Galerkin orthogonality.  
\end{proof}

Finally, we are in position to show our main result, Theorem~\ref{prop:bestApproximation}, by combining the decay estimate with the previous two lemmas.

\begin{proof}[Proof of Theorem~\ref{prop:bestApproximation}]
We start with the triangle inequality 
$$
 \|\UU - \UU_h^\YY\|_{\HH_\YY} \leq     \|\UU - \UU^\YY\|_{\HH_\YY} +     \|\UU^\YY - \UU_h^\YY\|_{\HH_\YY}.
 $$
For the first term, we use the decay properties of  Proposition~\ref{prop:truncation_error} to obtain 
$$
\|\UU - \UU^\YY\|_{\HH_\YY}  \lesssim  \|\UU_{\Omega}-\UU_\Omega^\YY\|_{H^1(y^\alpha, \Omega\times (0,\YY))}
      + \|\UU_{\star}-\UU_\star^\YY\|_{H^1_\rho(y^\alpha, \R^d \setminus \Gamma\times (0,\YY))}  \lesssim \YY^{-\mu/2}\norm{f}_{L^2(\Omega)}.
$$
For the second term, we employ the coercivity of Theorem~\ref{prop:cont_well_posedness},
the Galerkin orthogonality of Lemma~\ref{lemma:galerkin_ortho}, $\UU^\YY_\star|_\Omega = 0$, and a trace inequality for $\Omega$, which gives for arbitrary $\lambda_h \in \VHl$ and $\VV_h^\YY = (\VV_\Omega^\YY,\VV_\star^\YY)  \in  \HH_{h,\YY}$ that
  \begin{align*}
    \|\UU^\YY - \UU_h^\YY\|_{\HH_\YY}^2
    &\lesssim B^\YY(\UU^\YY-\UU_h^\YY,\UU^\YY-\UU_h^\YY) \\
    &  = B^\YY(\UU^\YY-\UU_h^\YY,\UU^\YY-\VV_h^\YY) +
      \int_{0}^{\YY}{y^\alpha
      \big<\normaljumps{\UU^{\YY}_\star}-\lambda_h, \gamma_{\Gamma}^-(\UU_{h,\star}^\YY -\VV_{\star}^\YY)\big>_{L^2(\Gamma)}\,dy} \\
    &\lesssim
      \varepsilon \|{\UU^\YY-\UU_h^\YY}\|_{\HH_\YY}^2 + \varepsilon^{-1}\|{\UU^\YY-\VV_h^\YY}\|_{\HH_\YY}^2  \\
    &\qquad  +\int_{0}^{\YY}{y^\alpha \|\normaljumps{\UU^{\YY}_{\star}} -\lambda_h\|_{H^{-1/2}(\Gamma)}\|\gamma_{\Gamma}^-(\UU_{h,\star}^\YY -\VV_{\star}^\YY)\|_{H^{1/2}(\Gamma)} dy} \\
    &\lesssim
      \varepsilon \|{\UU^\YY-\UU_h^\YY}\|_{\HH_\YY}^2 + \varepsilon^{-1}\|{\UU^\YY-\VV_h^\YY}\|_{\HH_\YY}^2  \\
    &\qquad  +\varepsilon^{-1}\int_{0}^{\YY}{y^\alpha \|\normaljumps{\UU^{\YY}_{\star}} -\lambda_h\|^2_{H^{-1/2}(\Gamma)} dy}
      +\varepsilon\|\UU_h^\YY -\VV_h^\YY\|_{\HH_\YY}^2 \\
    &\lesssim
      2\varepsilon \|{\UU^\YY-\UU_h^\YY}\|_{\HH_\YY}^2 + (\varepsilon+\varepsilon^{-1})\|{\UU^\YY-\VV_h^\YY}\|_{\HH_\YY}^2
      + \varepsilon^{-1}\int_{0}^{\YY}{y^\alpha \| \normaljumps{\UU^{\YY}_{\star}} -\lambda_h\|_{H^{-1/2}(\Gamma)}^2 dy}.
  \end{align*}
  Taking $\varepsilon$ sufficiently small and absorbing the first term in the left-hand side gives 
  $$
  \|\UU^\YY - \UU_h^\YY\|_{\HH_\YY}^2 \lesssim \|\UU^\YY-\VV_h^\YY\|_{\HH_\YY}^2 +
  \int_{0}^{\YY}{y^\alpha \|\normaljumps{\UU^{\YY}_{\star}} -\lambda_h\|^2_{H^{-1/2}(\Gamma)} dy}.
  $$
  As $\VV_h^\YY \in  \HH_{h,\YY}$ was arbitrary, we can take $\VV_h^\YY = \Pi(\UU^\YY_\Omega,\UU^\YY_\star) \in  \HH_{h,\YY}$ with the operator $\Pi$ of Lemma~\ref{lemma:tensor_interpolation}. Then,  Lemma~\ref{lemma:tensor_interpolation} together with the approximation properties of the $hp$-interpolation in $\YY$ gives
\begin{align*}
\|\UU^\YY- \Pi \UU^\YY\|_{\HH_\YY}^2 &\lesssim       \int_{0}^{\YY}  y^\alpha \Big( \big\|\nabla (1-\Pi_y)\UU^\YY(y)\big\|_{L^2}^2  +
      \big\|\nabla (1-\pi_\Omega) \UU_\Omega^\YY(y)\|_{L^2(\Omega)}^2   \Big)\,dy  \\
     &\lesssim \YY^{2\varepsilon} e^{-2\kappa p} + \int_{0}^{\YY}  y^\alpha  \big\|\nabla (1-\pi_\Omega) \UU_\Omega^\YY(y)\|_{L^2(\Omega)}^2 \,dy.
\end{align*}
 Combining all estimates gives the stated result.
\end{proof}

Finally, we present the proof of Corollary~\ref{cor:convergenceRates} that gives first order convergence for a specific choice of discrete spaces.

\begin{proof}[Proof of Corollary~\ref{cor:convergenceRates}]
  Employing \cite[Pro.~2.8]{part_one} -- which with the same techniques also holds for $\YY < \infty$ and a constant independent of $\YY$ -- together with the assumptions on $ \Omega, \mathfrak{A},$ and $f$,  we obtain control of second order $x$-derivatives of $\UU^\YY$.

As $\lambda:=\partial_\nu^+ \UU^{\YY}_\star$, it is piecewise smooth, depending on the
  regularity of $\UU^{\YY}_\star$.
  For $m=0,1$, denoting by $\pi_{L^2}$ the $L^2$-projection onto $\mathcal{S}^{0,0}(\TT_{\Gamma})$ and using a trace estimate, it holds that
  \begin{align*}
     \|\lambda(y) -\lambda_h(y)\|_{H^{-1/2}(\Gamma)}^2
    &\!\!\!\stackrel{\text{\cite[\tiny  Thm~4.1.33]{sauter_schwab}}}{\lesssim}\!\!
      h^{1/2} \|\lambda(y) - \pi_{L^2}\lambda(y)\|_{L^2(\Gamma)}^2  \\
     & \! \!\!\stackrel{\text{\cite[\tiny Prop~4.1.31]{sauter_schwab}}}\lesssim \!\! h^{1/2} h^{m}\sum_{K \in \TT_{\Gamma}}
    {\|\lambda(y)\|_{H^{m}(K)}^2} \lesssim h^{1/2} h^{m} \|\UU^{\YY}_{\star}(y)\|_{H^{m+3/2}(B_R(0)\backslash\Gamma)}^2.
  \end{align*}
  Interpolating between $m=0$ and $m=1$ gives
  \begin{align*}
    \|\lambda(y) -\lambda_h(y)\|_{H^{-1/2}(\Gamma)}^2    
    &\lesssim h \|\UU^{\YY}_{\star}(y)\|_{H^2(B_R(0)\backslash\Gamma)}^2.
  \end{align*}
Multiplying with $y^\alpha$ and integrating with respect to $y$ then controls the second term on the right-hand side of Theorem~\ref{prop:bestApproximation}.
For the first term, the approximation properties of the Scott-Zhang projection together with control of the second order $x$-derivatives gives first order convergence in $h$. Finally, the last two terms in Theorem~\ref{prop:bestApproximation} can also be bounded by $Ch$ by choice of $\YY$ and $p$.
\end{proof}

\section{Numerics}
\label{sec:numerics}
In this section, we present two numerical examples to underline the a-priori estimates of Theorem~\ref{prop:bestApproximation} and Corollary~\ref{cor:convergenceRates}. 
As previously already mentioned, a nice feature of our numerical scheme is that software packages developed for integer order differential operators can be employed directly. 
As such, we implement our method  based on a coupling of the libraries NGSolve (\cite{ngsolve}, for the FEM-part) and Bempp-cl (\cite{bempp-cl}, for the BEM-part) libraries.
\bigskip

In order to validate our numerical method, we consider the case $s=0$
and the standard fractional Laplacian, i.e., $\mathfrak{A}=I$.
In this case a representation formula is available
from \cite{caffarelli_silvestre}. In fact, the fundamental solution for the fractional Laplacian is given by
\begin{align*}
\Psi(x) := \frac{C_{d,\beta}}{\abs{x}^{d-2\beta}} \qquad x \in \R^d\backslash \{0\}, \; d\neq 2\beta
\end{align*}
with $C_{d,\beta}:= \frac{\Gamma(d/2-\beta)}{2^{2\beta}\pi^{d/2}\Gamma(\beta)}$.
Thus, for $f \in C_{0}^{\infty}(\Omega)$ we can write
$$
u(x)=C_{d,\beta}\int_{\R^d}{\frac{f(y)}{\abs{x-y}^{d-2\beta}} \;dy}.
$$

We then calculate $u(x)$ at random sampling points $x_j$ using spherical coordinates
and Gauss-Jacobi numerical integration to
deal with the singularity at $r=|x-x_j|=0$ as well as standard Gauss-Quadrature for the other coordinate directions.

In order to compute the energy error, 
  we compute the energy differences.
  For standard FEM with bilinear form $a(\cdot,\cdot)$ and right-hand side $f(\cdot)$, it is well known that one can compute the energy error by the identity $\|u-u_h\|^2_{E}=a(u,u)-a(u_h,u_h)=f(u)-f(u_h).$
  Due to the more complicated form of our method, most notably the presence of the cutoff error, such an identity does not hold exactly. Nevertheless, we expect the following identity to hold approximately
  $$
  \|\UU - \UU_{h}^{\YY}\|_{\HHwy}^2 \approx (f,\trace \UU)_{L^2(\Omega)} - (f,\trace \UU^{\YY}_h)_{L^2(\Omega)}.
  $$
  We now further replace the unknown value $(f,\trace \UU)_{L^2(\Omega)}$ by the
  extrapolation from $(f,\trace \UU^{\YY}_h)_{L^2(\Omega)}$ for different refinements
  using Aitken's $\Delta^2$-method.  This will be our approximation of the true energy error.
  For the $L^2$-error, we use the approximation $\UU^{\YY}_h$ on the finest grid as our standin
  for the exact solution compare it to the other approximations
  by computing the $L^2$-difference of the traces at $y=0$ using  Gauss quadrature.

For the geometry, we used the unit cube $\Omega:=[-1,1]^3$.
In the bounded domain $\Omega$, we use piecewise linear Lagrangian finite elements on a quasi-uniform mesh of maximal mesh width $h$.

In Figure~\ref{fig:convergence_rates}, we study the convergence  of the proposed fully discrete method
as we reduce the mesh size. In order to reduce all the error contributions, 
we choose the cutoff point $\YY=h^{-\frac{2}{1+\alpha}}$ which gives $\bigO(h)$
for the cutoff error in  Proposition~\ref{prop:truncation_error}. Since the convergence with respect to
the polynomial degree is exponential (but with unknown explicit rate), we use
$p:=\operatorname{round}(2m \log(m+1))$ where $m$ is the number of uniform $h$-refinements. This gives a decrease of the
$y$-discretization error which is faster than $\bigO(h)$. Overall we
expect the energy error to behave like $\bigO(h)$ by Corollary~\ref{cor:convergenceRates}.
For the pointwise and $L^2$-errors we did not establish a rigorous theory in this work. Nonetheless, Figure~\ref{fig:convergence_rates} shows convergence rates for these error measures
of roughly order $\bigO(h^2)$.

\begin{figure}
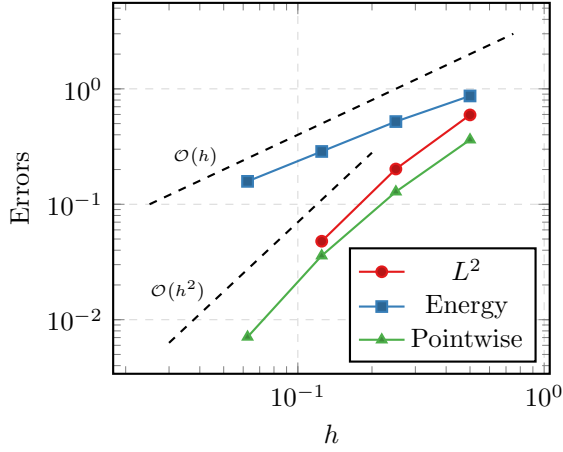
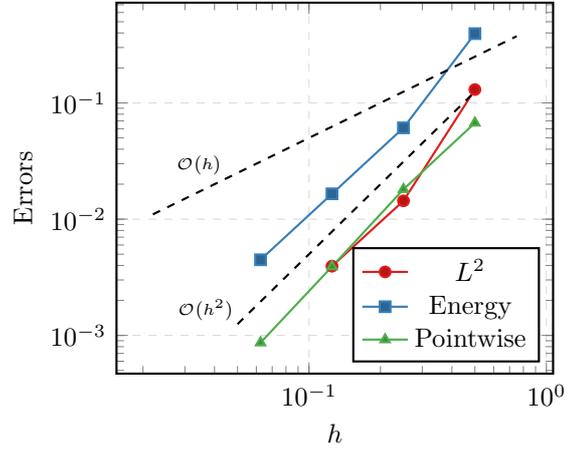
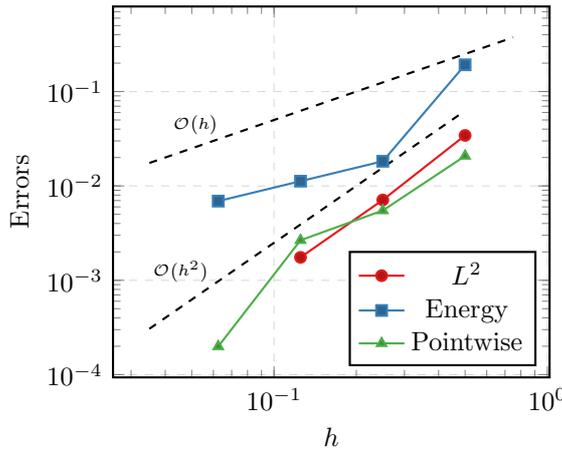
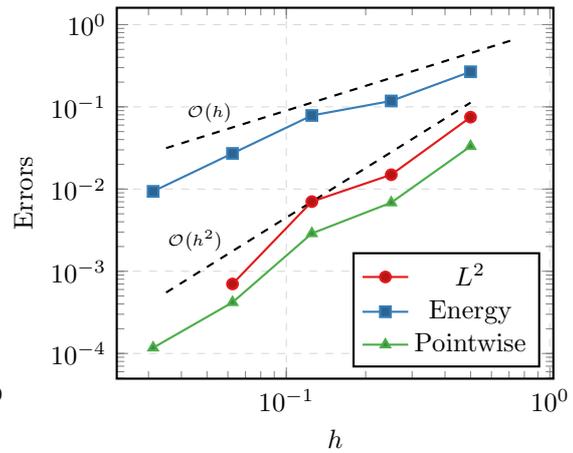

  \centering
  \begin{subfigure}{0.46\textwidth}
    \includeTikzOrEps{convergence_01}
    \vspace{-5mm}
  \caption{Convergence for $\beta=0.1$}
  \label{fig:convergence1}
\end{subfigure}
\begin{subfigure}{0.46\textwidth}
  \includeTikzOrEps{convergence_03}
  \vspace{-5mm}
  \caption{Convergence for $\beta=0.3$}
  \label{fig:convergence3}
\end{subfigure} \\\vspace{5mm}
  \begin{subfigure}{0.46\textwidth}
    \includeTikzOrEps{convergence_05}
    \vspace{-5mm}
  \caption{Convergence for $\beta=0.5$}
  \label{fig:convergence5}
\end{subfigure}
\begin{subfigure}{0.46\textwidth}
  \includeTikzOrEps{convergence_07}
      \vspace{-5mm}
  \caption{Convergence for $\beta=0.7$}
  \label{fig:convergence7}
\end{subfigure}
\caption{Convergence of our discrete approximation to the exact solution for different fractional powers $\beta$ in different norms.}
\label{fig:convergence_rates}
\end{figure}

As a second numerical example, we consider as the domain $\Omega$ the unit sphere in $\R^3$.
Instead of using the standard Laplacian with constant coefficients, we consider
the following diffusion parameter and right-hand side:
\begin{align*}
  \mathfrak{A}(x):=\begin{cases}
    1+|x|(1-|x|) & \text{for }|x|<1 \\
    1 & \text{for } |x|\geq 0,
  \end{cases},
        \qquad \text{and}\qquad
        f(x):=\begin{cases}
         |x|(1-|x|) & \text{for } |x|<1 \\
         0 & \text{for } |x|\geq 0,
       \end{cases}
\end{align*}
(with the slight abuse of notation of making $\mathfrak{A}(x)$ scalar valued).
Since the coefficients are globally continuous, and we are working with lowest order
elements, by Corollary~\ref{cor:convergenceRates} we expect to obtain first order convergence. Figure~\ref{fig:convergence_rates_nc} supports the theoretical results.
Since in this case the fundamental solution is not available, we can not compute the pointwise error, but looking at the extrapolated energy and $L^2$-errors we get the optimal rates.

\begin{figure}
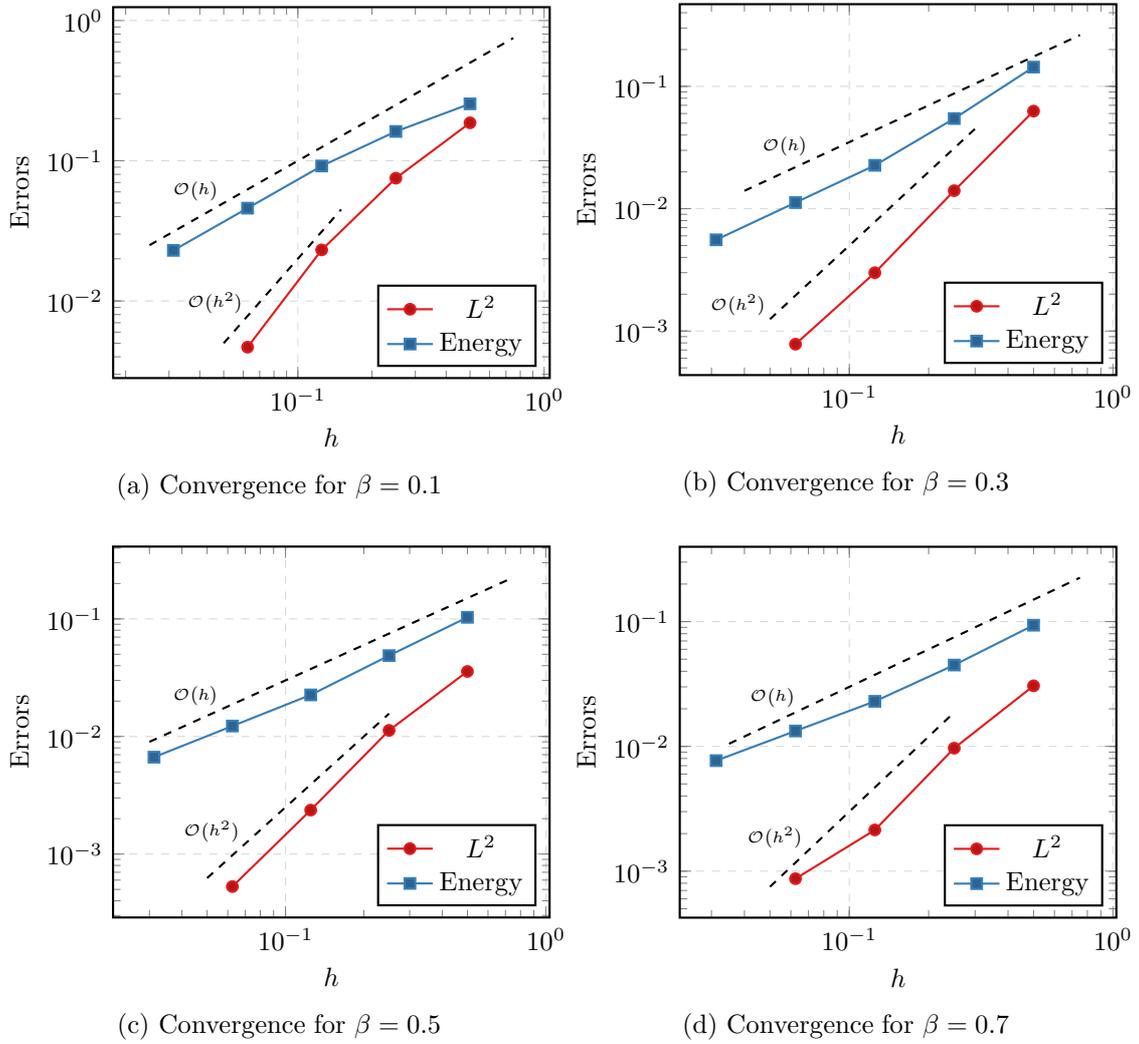

  \centering
  \begin{subfigure}{0.46\textwidth}
    \includeTikzOrEps{convergence_nonconst_01}
    \vspace{-5mm}
  \caption{Convergence for $\beta=0.1$}
  \label{fig:convergence_nc1}
\end{subfigure}
\begin{subfigure}{0.46\textwidth}
  \includeTikzOrEps{convergence_nonconst_03}
  \vspace{-5mm}
  \caption{Convergence for $\beta=0.3$}
  \label{fig:convergence_nc3}
\end{subfigure} \\\vspace{5mm}
  \begin{subfigure}{0.46\textwidth}
    \includeTikzOrEps{convergence_nonconst_05}
    \vspace{-5mm}
  \caption{Convergence for $\beta=0.5$}
  \label{fig:convergence_nc5}
\end{subfigure}
\begin{subfigure}{0.46\textwidth}
  \includeTikzOrEps{convergence_nonconst_07}
      \vspace{-5mm}
  \caption{Convergence for $\beta=0.7$}
  \label{fig:convergence_nc7}
\end{subfigure}
\caption{Convergence of our discrete approximation to the exact solution
  for different fractional powers $\beta$ in different norms, the non-constant coefficients case.}
\label{fig:convergence_rates_nc}
\end{figure}

\paragraph{Acknowledgments:} 
A.R. gladly acknowledges
financial support  by the Austrian Science Fund (FWF) through the 
project P~36150.

\bibliographystyle{alphaabbr}
\bibliography{literature}

\end{document}